\newtheorem{theorem}{Theorem}[section]
\newtheorem{lemma}[theorem]{Lemma}
\newtheorem{definition}[theorem]{Definition}
\newtheorem{proposition}[theorem]{Proposition}
\newtheorem{corollary}[theorem]{Corollary}
\newtheorem{example}[theorem]{Example}
\newtheorem{remark}[theorem]{Remark}
\theoremstyle{definition}
\newcommand\pf{\begin{proof}}
\newcommand\epf{\end{proof}}
\DeclareMathOperator{\reg}{reg}
\numberwithin{equation}{section}
\title{Half-liberated real spheres and their subspaces}
\author{Julien Bichon}
\address{
Laboratoire de Math\'ematiques,
Universit\'e Blaise Pascal, Campus des c\'ezeaux, 3 place Vasarely,
63178 Aubi\`ere Cedex, France}
\email{Julien.Bichon@math.univ-bpclermont.fr}
\subjclass[2010]{46L85}
\begin{document}

\begin{abstract}
 We describe the quantum subspaces of Banica-Goswami's half-liberated real-spheres, showing in particular that there is a bijection between the symmetric ones and the conjugation stable closed subspaces of the complex projective spaces. 
\end{abstract}

\maketitle

\section{Introduction}

Let $n \geq 1$. The half-liberated real sphere $S_{\mathbb R, *}^{n-1}$ was defined by Banica and Goswami \cite{bago} as the quantum space corresponding to the $C^*$-algebra
$$C(S_{\mathbb R, *}^{n-1})= C^*\left(v_1, \ldots , v_n \ | \ \sum_{i=1}^nv_i^2=1, \ v_i^*=v_i, \ v_iv_jv_k=v_kv_jv_i, \ 1 \leq i,j,k\leq n\right)$$
It corresponds to a natural quantum homogeneous space over the half-liberated  orthogonal quantum group $O_n^*$ introduced by Banica and Speicher in \cite{basp}. These quantum spaces and groups, although defined by very simple means by the intriguing half-commutativity relations $abc=cba$ arising from representation-theoretic considerations via Woronowicz' Tannaka-Krein duality \cite{wor}, turned out to be new in the field, and definitively of interest. See \cite{ban1,ban2} for recent developments and general discussions on non-commutative spheres.

The aim of this paper is to describe the  quantum subspaces of $S_{\mathbb R, *}^{n-1}$: we will show in particular that there is a natural bijection between the symmetric ones (see Section \ref{sec:sym} for the definition) and the conjugation stable closed subspaces of the complex projective space $P^{n-1}_{\mathbb C}$.
The description of all subspaces is also done, but is more technical, and uses representation theory methods, inspired by those used by Podle\'s \cite{pod} in the determination of the quantum subgroups of $SU_{-1}(2)\simeq O_2^*$.
It follows from our analysis that the quantum subspaces of $S_{\mathbb R, *}^{n-1}$ completely can be described by means of classical spaces, a fact already noted in the description of the quantum subgroups of $O_n^*$ in \cite{bdv}.  
As in \cite{bdv}, a crossed product model  provides the bridge linking the quantum subspaces and the subspaces of an appropriate classical space.

In fact, for the description of all the quantum subspaces, we will work in a more general context, where a quantum space $Z_{\mathbb R, *}$ is associated to any compact space $Z$ endowed with an appropriate  $\mathbb T \rtimes \mathbb Z_2 \simeq O_2$ continuous action, and where $S_{\mathbb R, *}^{n-1}$ is obtained from the complex sphere $S^{n-1}_{\mathbb C}$. We do not know if our general framework really furnishes new examples of interest, however we think that, as often with abstract settings,  it has the merit to clean-up arguments, can ultimately simplify the theory, and could be well-suited for other developments, such as $K$-theory computations.

The paper is organized as follows. Section 2 consists of preliminaries. In Section 3, we construct a faithful crossed product representation of $C(S_{\mathbb R, *}^{n-1})$ and use the sign  automorphism to construct a $\mathbb Z_2$-grading on   $C(S_{\mathbb R, *}^{n-1})$. We then show that there is an explicit bijective correspondence between symmetric quantum subspaces of $S_{\mathbb R, *}^{n-1}$ (corresponding to $\mathbb Z_2$-graded ideals of $C(S_{\mathbb R, *}^{n-1})$) and 
conjugation stable closed subspaces of the complex projective space $P^{n-1}_{\mathbb C}$. Section 4, in which we work in a slightly more general framework, is devoted to the description of all the quantum subspaces of $S_{\mathbb R, *}^{n-1}$, in terms of pairs of certain subspaces of the complex sphere $S^{n-1}_{\mathbb C}$.

\smallskip

\textbf{Acknowledgements.} I wish to thank Teodor Banica for useful discussions.

\section{Notations and preliminaries}\label{se:prelim}

\subsection{Conventions}
All $C^*$-algebras are assumed to be unital, as well as all $C^*$-algebra maps. By a compact space we mean a compact Hausdorff space,  unless otherwise specified. The categories of compact spaces and of commutative $C^*$-algebras are anti-equivalent by the classical Gelfand duality, and we use, in a standard way, the language of  quantum spaces: a $C^*$-algebra $A$ is viewed as the algebra of continuous functions on a (uniquely determined) quantum space $X$, and we write $A=C(X)$. Quantum subspaces $Y \subset X$ correspond to surjective $*$-algebra maps $C(X) \rightarrow C(Y)$, and hence as well to ideals of $C(X)$. The quantum space $X$ is said to be classical (resp. non-classical) if the $C^*$-algebra $C(X)$ is commutative (resp. non-commutative).

\subsection{Classical spaces} The real and complex spheres are denoted respectively by $S^{n-1}_{\mathbb C}$
and $S^{n-1}_{\mathbb R}$, and their $C^*$-algebras of continuous functions always are endowed with their usual presentations:
$$C(S_{\mathbb C}^{n-1})= C^*\left(z_1, \ldots , z_n \ | \ \sum_{i=1}^nz_iz_i^*=1,  \ z_i z_j=z_jz_i, \ z_iz_j^*=z_j^*z_i, \ 1 \leq i,j,\leq n\right)$$
where $z_1, \ldots , z_n$ are the standard coordinate functions;
$$C(S_{\mathbb R}^{n-1})= C^*\left(x_1, \ldots , x_n \ | \ \sum_{i=1}^nx_i^2=1, \ x_i^*=x_i, \ x_ix_j=x_jx_i, \ 1 \leq i,j\leq n\right)$$
where $x_1, \ldots , x_n$ are the standard coordinate functions. 
The sphere $S_{\mathbb C}^0$ is, as usual, denoted $\mathbb T$.

The complex projective space $P^{n-1}_{\mathbb C}$ is the orbit space $S^{n-1}_{\mathbb C}/\mathbb T$ for the usual $\mathbb T$-action by multiplication. Of course $C(P^{n-1}_{\mathbb C})$ is isomorphic to a $C^*$-subalgebra of $C(S^{n-1}_{\mathbb C})$, via the natural identification $C(S^{n-1}_{\mathbb C}/\mathbb T) \simeq C(S^{n-1}_{\mathbb C})^\mathbb T$, the later $C^*$-algebra being the $C^*$-sub-algebra of $C(S^{n-1}_{\mathbb C})$ generated by the elements $z_iz_j^*$ (by the Stone-Weierstrass theorem). 
Moreover, the $C^*$-algebra $C(P^{n-1}_{\mathbb C})$ has the following presentation,  communicated to me by T. Banica.

\begin{lemma}\label{presproj}
 The $C^*$-algebra $C(P^{n-1}_{\mathbb C})$ has the presentation 
$$C(P_{\mathbb C}^{n-1}) \simeq C^*\left(p_{ij}, 1 \leq i,j \leq n \ | \ p=p^*=p^2, \ {\rm tr}(p)=1, \ p_{ij}p_{kl}=p_{kl}p_{ij}, \ 1 \leq i,j \leq n\right)$$
were $p$ denotes the matrix $(p_{ij})$, and where the element $p_{ij}$ corresponds to the element $z_iz_j^*$.
\end{lemma}

\begin{proof}
 Denote by $A$ the $C^*$-algebra on the right. It is straightforward to check that there exists a $*$-algebra map
\begin{align*}
  A &\longrightarrow C(P^{n-1}_{\mathbb C}) \\
p_{ij} &\longmapsto z_iz_j^* 
\end{align*}
which is surjective. To show the injectivity, it is enough, by Gelfand duality,  to show that the corresponding continuous map 
\begin{align*}
P^{n-1}_ {\mathbb C} = S^{n-1}_ {\mathbb C}/\mathbb T& \longrightarrow  \{ p \in M_n(\mathbb C), \ p=p^*=p^2, \ {\rm tr}(p)=1 \} \\
(z_1, \ldots , z_n) & \longmapsto (z_iz_j^*)
\end{align*}
is surjective, which follows from the standard reduction theory of projections.
\end{proof}

To conclude this section, we introduce a last piece of notation. The complex conjugation induces an order two automorphism of $C(S_{\mathbb C}^{n-1})$, that we denote $\tau$, with $\tau(z_i)=z_i^*$. This enables us to form the crossed product $C(S_{\mathbb C}^{n-1}) \rtimes \mathbb Z_2$, that we use intensively in the rest of the paper.

\section{Half-liberated spheres and their symmetric subspaces}\label{sec:sym}

Recall from the introduction that the half-liberated real sphere $S_{\mathbb R, *}^{n-1}$  \cite{bago} is the quantum space corresponding to the $C^*$-algebra
$$C(S_{\mathbb R, *}^{n-1})= C^*\left(v_1, \ldots , v_n \ | \ \sum_{i=1}^nv_i^2=1, \ v_i^*=v_i, \ v_iv_jv_k=v_kv_jv_i, \ 1 \leq i,j,k\leq n\right)$$

\subsection{$\mathbb Z_2$-grading on $C(S^{n-1}_{\mathbb R,*})$}

\begin{definition}
 The sign automorphism of $C(S^{n-1}_{\mathbb R,*})$, denoted $\nu$, is the automorphism defined by $\nu(v_i)=-v_i$, for any $i$.
\end{definition}


The sign automorphism defines a $\mathbb Z_2$-grading on the algebra $C(S^{n-1}_{\mathbb R,*})$:
$$C(S^{n-1}_{\mathbb R,*})=C(S^{n-1}_{\mathbb R,*})_0 \oplus C(S^{n-1}_{\mathbb R,*})_1$$
where $C(S^{n-1}_{\mathbb R,*})_0=\{a \in C(S^{n-1}_{\mathbb R,*})\ | \ \nu(a)=a\}$ and $C(S^{n-1}_{\mathbb R,*})_1=\{a \in C(S^{n-1}_{\mathbb R,*}) \ | \ \nu(a)=-a\}$.
Here of course $C(S^{n-1}_{\mathbb R,*})_0$ is the fixed point algebra for the $\mathbb Z_2$-action on $C(S^{n-1}_{\mathbb R,*})$ defined by $\nu$. It has the following description.

\begin{lemma}\label{zero}
 The $C^*$-subalgebra generated by the elements $v_iv_j$, $1 \leq i,j \leq n$, is commutative, and coincides with $C(S^{n-1}_{\mathbb R,*})_0$.
\end{lemma}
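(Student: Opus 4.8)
The plan is to prove the two assertions separately: first that the generating elements $v_iv_j$ pairwise commute (forcing commutativity of the generated $C^*$-subalgebra, which I call $B$), and then that $B$ is exactly the fixed-point algebra. For commutativity, the idea is to apply the half-commutativity relation $v_iv_jv_k=v_kv_jv_i$ twice. Starting from a product of four generators and applying the relation first to the three leftmost factors and then to the three rightmost factors gives
\[
v_iv_jv_kv_l = v_kv_jv_iv_l = v_kv_lv_iv_j,
\]
so that $(v_iv_j)(v_kv_l)=(v_kv_l)(v_iv_j)$ for all indices. Since $(v_iv_j)^*=v_jv_i$ again lies in the generating family, this family is stable under the involution; hence the $*$-algebra it generates is commutative, and so is its norm-closure $B$. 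This establishes the first claim.

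The inclusion $B\subseteq C(S^{n-1}_{\mathbb R,*})_0$ is immediate, since $\nu(v_iv_j)=(-v_i)(-v_j)=v_iv_j$, so each generator of $B$ is $\nu$-fixed and therefore lies in the degree-zero component.

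For the reverse inclusion I would use the norm-one conditional expectation $E=\frac12(\id+\nu)$ onto the fixed-point algebra. Because $C(S^{n-1}_{\mathbb R,*})$ is the closure of the $*$-algebra of polynomials in the self-adjoint generators $v_1,\dots,v_n$, and $E$ is continuous and kills every odd-length monomial while fixing every even-length one, the algebra $C(S^{n-1}_{\mathbb R,*})_0$ is the closed linear span of the even-length monomials $v_{i_1}\cdots v_{i_{2m}}$. Each such monomial factors as $(v_{i_1}v_{i_2})(v_{i_3}v_{i_4})\cdots(v_{i_{2m-1}}v_{i_{2m}})$, a product of generators of $B$, and hence belongs to $B$. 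As $B$ is norm-closed, it contains the whole closed span, giving $C(S^{n-1}_{\mathbb R,*})_0\subseteq B$ and thus equality. The only step demanding genuine care is the double application of the half-commutativity relation in the commutativity computation; the identification with the fixed-point algebra is then a routine density argument via the conditional expectation $E$.
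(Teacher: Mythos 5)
Your proof is correct and follows essentially the same route as the paper's: reduce to the dense $*$-subalgebra of polynomials in the $v_i$, identify its $\nu$-fixed part with the linear span of even-length monomials, factor those monomials into products of the $v_iv_j$, and conclude by density. Your conditional expectation $E=\frac{1}{2}(\mathrm{id}+\nu)$ is precisely the standard averaging argument behind the parenthetical fact the paper invokes (density of $\mathcal{A}^G$ in $A^G$ for a finite group action on a $C^*$-algebra). The one point where you add content is the commutativity of the $v_iv_j$: the paper cites this as known from the literature, whereas your double application of half-commutation, $v_iv_jv_kv_l=v_kv_jv_iv_l=v_kv_lv_iv_j$, proves it directly and correctly, making the argument self-contained.
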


\begin{proof}
The commutativity  $C^*(v_iv_j)$, a fundamental and direct observation, is known \cite{bve,bago}.
 It is clear that $v_iv_j \in C(S^{n-1}_{\mathbb R,*})_0$ for any $i,j$, hence $C^*(v_iv_j)\subset C(S^{n-1}_{\mathbb R,*})_0$. Denote by $\mathcal O(S^{n-1}_{\mathbb R,*})$ the dense $*$-subalgebra of $C(S^{n-1}_{\mathbb R,*})$ generated by the elements $v_i$. It is clear that $\mathcal O(S^{n-1}_{\mathbb R,*})_0$ consists of the linear span of monomials of even length, hence is generated as an algebra by the elements $v_iv_j$. Therefore we get the announced result, since $\mathcal O(S^{n-1}_{\mathbb R,*})_0$ is dense in $C(S^{n-1}_{\mathbb R,*})_0$ (if $A$ is $C^*$-algebra acted on by a finite group and $\mathcal A \subset A$ is a dense $*$-subalgebra, then $\mathcal A^G$ is dense in $A^G$).
\end{proof}

\begin{definition}
A quantum subspace $X \subset S^{n-1}_{\mathbb R,*}$ is said to be symmetric if the corresponding ideal $I$
is $\mathbb Z_2$-graded, i.e. $$I=I_0\oplus I_1, \ {\rm with} \ I_0=I \cap C(S^{n-1}_{\mathbb R,*})_0 \ {\rm  and} \ I_1= I \cap C(S^{n-1}_{\mathbb R,*})_1$$ or in other words, if $\nu$ induces an automorphism of $C(S^{n-1}_{\mathbb R,*})/I$.
\end{definition}

\subsection{Faithful crossed product representation of $C(S^{n-1}_{\mathbb R,*})$}
We now describe a crossed product model for $C(S^{n-1}_{\mathbb R,*})$, using the crossed product $C(S_{\mathbb C}^{n-1}) \rtimes \mathbb Z_2$ associated to the conjugation action on $S^{n-1}_{\mathbb C}$, see the previous Section. A related construction was already considered in the quantum group setting in \cite{bdv}.

\begin{theorem}\label{faithcrossed}
There exists an injective $*$-algebra map
\begin{align*}
 \pi : C(S^{n-1}_{\mathbb R,*}) & \longrightarrow C(S_{\mathbb C}^{n-1}) \rtimes \mathbb Z_2 \\
v_i & \longmapsto z_i \otimes \tau
\end{align*}
\end{theorem}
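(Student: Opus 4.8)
The plan is to obtain $\pi$ from the universal property of $C(S^{n-1}_{\mathbb R,*})$ and then to prove injectivity by a grading argument that reduces everything to the even subalgebra, which the two preceding lemmas identify with $C(P^{n-1}_{\mathbb C})$.

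First I would check that the elements $z_i \otimes \tau$ satisfy the defining relations of $C(S^{n-1}_{\mathbb R,*})$, so that $\pi$ exists as a unital $*$-algebra map by the universal property. Writing the product rule in the crossed product as $(a \otimes \tau)(b \otimes \tau) = a\,\tau(b) \otimes 1$ and recalling $\tau(z_i)=z_i^*$, one gets $(z_i \otimes \tau)^* = \tau(z_i^*)\otimes \tau = z_i \otimes \tau$, so the images are self-adjoint; moreover $(z_i \otimes \tau)^2 = z_iz_i^* \otimes 1$, whence $\sum_i (z_i \otimes \tau)^2 = \sum_i z_iz_i^* \otimes 1 = 1$. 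The only relation needing a remark is half-commutativity: since $(z_i\otimes\tau)(z_j\otimes\tau)(z_k\otimes\tau) = z_iz_j^*z_k \otimes \tau$, the relation $v_iv_jv_k=v_kv_jv_i$ is transported into $z_iz_j^*z_k = z_kz_j^*z_i$, which holds because $C(S^{n-1}_{\mathbb C})$ is commutative. Thus $\pi$ is a well-defined $*$-homomorphism.

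For injectivity, the key observation is that $\pi$ intertwines the sign automorphism $\nu$ with the automorphism $\beta$ of the crossed product fixing $C(S^{n-1}_{\mathbb C})$ pointwise and sending $1 \otimes \tau \mapsto -(1\otimes \tau)$, since $\beta(z_i \otimes \tau) = -z_i\otimes\tau = \pi(\nu(v_i))$. Hence $\ker \pi$ is $\nu$-stable, and therefore $\mathbb Z_2$-graded, $\ker\pi = (\ker\pi)_0 \oplus (\ker\pi)_1$ with $(\ker\pi)_\varepsilon = \ker\pi \cap C(S^{n-1}_{\mathbb R,*})_\varepsilon$. If $a \in (\ker\pi)_1$ then $a^*a \in (\ker\pi)_0$, so it suffices to prove that $\pi$ is injective on the even subalgebra $C(S^{n-1}_{\mathbb R,*})_0$: that forces $(\ker\pi)_0=0$, then $a^*a=0$ and $a=0$, giving $\ker\pi=0$.

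It then remains to identify the even part. By Lemma \ref{zero} we have $C(S^{n-1}_{\mathbb R,*})_0 = C^*(v_iv_j)$, and since $\pi(v_iv_j) = z_iz_j^*\otimes 1$, the map $\pi$ restricts to a surjection onto $C^*(z_iz_j^*) = C(P^{n-1}_{\mathbb C})$. To see it is injective I would build an explicit inverse using Lemma \ref{presproj}: the matrix $w=(v_iv_j)$ is self-adjoint because $(v_iv_j)^*=v_jv_i$, satisfies $\mathrm{tr}(w)=\sum_i v_i^2=1$, and is idempotent since $(w^2)_{ij}=v_i(\sum_k v_k^2)v_j = v_iv_j$, while its entries commute by Lemma \ref{zero}. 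The universal property of the presentation in Lemma \ref{presproj} then yields a $*$-homomorphism $C(P^{n-1}_{\mathbb C}) \to C(S^{n-1}_{\mathbb R,*})_0$, $p_{ij} \mapsto v_iv_j$, inverse to $\pi|_{C(S^{n-1}_{\mathbb R,*})_0}$ on generators and hence everywhere. So $\pi$ is an isomorphism on the even part, and by the previous paragraph $\pi$ is injective. The main obstacle is precisely this injectivity on the even subalgebra: existence and the grading reduction are formal, whereas controlling the a priori mysterious commutative algebra $C^*(v_iv_j)$ requires matching it exactly with $C(P^{n-1}_{\mathbb C})$, which is what Lemmas \ref{zero} and \ref{presproj} are tailored to make possible.
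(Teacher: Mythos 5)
Your proof is correct and follows essentially the same route as the paper: reduce injectivity to the even subalgebra via the $\mathbb Z_2$-gradings, then use Lemma \ref{presproj} to build the map $\Phi : C(P^{n-1}_{\mathbb C}) \to C(S^{n-1}_{\mathbb R,*})_0$, $z_iz_j^* \mapsto v_iv_j$, and observe that $\pi\Phi = \mathrm{id}\otimes 1$ together with Lemma \ref{zero} forces $\pi$ to be injective on the even part. The only differences are that you spell out details the paper leaves implicit, namely the verification of the defining relations for existence, the ``standard argument'' ($a^*a \in (\ker\pi)_0$ for odd $a$ in the kernel), and the check that $(v_iv_j)$ is a trace-one projection with commuting entries.
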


\begin{proof}
It is straightforward to construct $\pi$, and this is left to the reader, and we have to show that $\pi$ is injective. First note that $C(S_{\mathbb C}^{n-1}) \rtimes \mathbb Z_2$ is $\mathbb Z_2$-graded as well, with 
grading defined by 
$$(C(S_{\mathbb C}^{n-1}) \rtimes \mathbb Z_2)_0=C(S_{\mathbb C}^{n-1}) \otimes 1, \ (C(S_{\mathbb C}^{n-1}) \rtimes \mathbb Z_2)_1=C(S_{\mathbb C}^{n-1}) \otimes \tau$$
and that $\pi$ preserves the respective $\mathbb Z_2$-gradings. Thus a standard argument shows that it is enough to show that the restriction of $\pi$ to $C(S^{n-1}_{\mathbb R,*})_0$ is injective. This is known, combining Lemma \ref{zero} and Theorem 3.3 in \cite{bago}, but for the sake of completeness, we provide a proof.
We have $\pi(v_iv_j)= z_iz_j^*\otimes 1$, and by Lemma \ref{presproj}, we see that there exists a $*$-algebra map 
$\Phi : C(P^{n-1}_{\mathbb C}) \rightarrow  C(S^{n-1}_{\mathbb R,*})$, $z_iz_j^* \mapsto v_iv_j$. We have
${\rm Im}(\Phi)=C(S^{n-1}_{\mathbb R,*})_0$ by Lemma \ref{zero}, and since $\pi\Phi={\rm id}\otimes 1$, we see that $\Phi$ is injective and induces an isomorphism $C(P^{n-1}_{\mathbb C}) \simeq  C(S^{n-1}_{\mathbb R,*})_0$. The restriction of $\pi$ to  $C(S^{n-1}_{\mathbb R,*})_0$ is $\Phi^{-1}\otimes 1$, hence $\pi$ is injective.
\end{proof}

For future use, we record the following fact, which is Theorem 3.2 in \cite{bago},  and was just reproved during the proof of Theorem \ref{faithcrossed}.

\begin{theorem}\label{future}
 There exists a $*$-algebra isomorphism \begin{align*}
                                        \Phi : C(P^{n-1}_{\mathbb C}) &\longrightarrow  C(S^{n-1}_{\mathbb R,*})_0 \\
 z_iz_j^* &\longmapsto v_iv_j \end{align*}
\end{theorem}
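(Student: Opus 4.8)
The plan is to build $\Phi$ directly from the presentation of $C(P^{n-1}_{\mathbb C})$ given in Lemma \ref{presproj}, and then to deduce its injectivity essentially for free from the faithful representation $\pi$ of Theorem \ref{faithcrossed}. First I would form the matrix $P = (v_iv_j)_{1\leq i,j\leq n}$ with entries in $C(S^{n-1}_{\mathbb R,*})$ and check that it satisfies the relations defining $A = C(P^{n-1}_{\mathbb C})$ in Lemma \ref{presproj}. Self-adjointness follows from $(v_jv_i)^* = v_i^*v_j^* = v_iv_j$, so that $(P^*)_{ij} = (P_{ji})^* = v_iv_j = P_{ij}$; idempotency follows from the sphere relation, since $(P^2)_{ij} = \sum_k v_iv_kv_kv_j = v_i(\sum_k v_k^2)v_j = v_iv_j$; the trace condition is exactly $\sum_i v_i^2 = 1$; and commutativity of the entries is precisely the commutativity of $C^*(v_iv_j)$ recorded in Lemma \ref{zero}. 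By the universal property expressed in Lemma \ref{presproj}, these verifications produce a $*$-algebra map $\Phi : C(P^{n-1}_{\mathbb C}) \to C(S^{n-1}_{\mathbb R,*})$ with $\Phi(z_iz_j^*) = v_iv_j$.

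Next I would identify the image: since $\Phi$ hits all the generators $v_iv_j$ of $C(S^{n-1}_{\mathbb R,*})_0$, Lemma \ref{zero} gives ${\rm Im}(\Phi) = C^*(v_iv_j) = C(S^{n-1}_{\mathbb R,*})_0$, so $\Phi$ is surjective onto the fixed-point algebra. It then remains to prove injectivity, and here the point is to compose with $\pi$. Using the crossed product multiplication one computes $\pi\Phi(z_iz_j^*) = \pi(v_iv_j) = (z_i\otimes\tau)(z_j\otimes\tau) = z_i\tau(z_j)\otimes 1 = z_iz_j^*\otimes 1$, so that $\pi\Phi$ is the composite of the canonical inclusion $C(P^{n-1}_{\mathbb C}) \simeq C(S^{n-1}_{\mathbb C})^{\mathbb T} \hookrightarrow C(S^{n-1}_{\mathbb C})$ with $a \mapsto a\otimes 1$. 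Both of these are injective, hence $\pi\Phi$ is injective, which forces $\Phi$ to be injective, and therefore an isomorphism onto $C(S^{n-1}_{\mathbb R,*})_0$.

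I do not expect a genuine obstacle here, since the substance of the statement has already been deposited in the earlier results: the commutativity of $C^*(v_iv_j)$ (Lemma \ref{zero}) is exactly what makes $\Phi$ well defined, and the faithfulness of $\pi$ (Theorem \ref{faithcrossed}) is exactly what makes $\Phi$ injective without further work. The only mild care needed is bookkeeping in the crossed product, namely that $\tau$ squares to the identity so that $(z_i\otimes\tau)(z_j\otimes\tau)$ lands in the degree-zero part $C(S^{n-1}_{\mathbb C})\otimes 1$ with $\tau(z_j) = z_j^*$. In short, the theorem is a repackaging of the two injectivity ingredients already assembled in the proof of Theorem \ref{faithcrossed}.
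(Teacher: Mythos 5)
Your proposal is correct and follows essentially the same route as the paper: $\Phi$ is obtained from the presentation of Lemma \ref{presproj}, its image is identified with $C(S^{n-1}_{\mathbb R,*})_0$ via Lemma \ref{zero}, and injectivity comes from the computation $\pi\Phi(z_iz_j^*) = z_iz_j^*\otimes 1$, exhibiting $\pi\Phi$ as the canonical injection. One small remark: what your argument actually uses is this identification of $\pi\Phi$ (not the injectivity of $\pi$ itself, which the paper in fact deduces \emph{from} the isomorphism $\Phi$), so your closing sentence slightly misstates the logical dependence, but the proof body is exactly the paper's.
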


\subsection{Symmetric subspaces of half-liberated spheres} Before describing the symmetric subspaces of $S^{n-1}_{\mathbb R,*}$, we need a last ingredient.

\begin{definition}
 We denote by  $\gamma$ the linear endomorphism of $C(S^{n-1}_{\mathbb R,*})$ defined by $$\gamma(a)=\sum_{i=1}^nv_iav_i$$
\end{definition}
 
The main properties of $\gamma$ are summarized in the following lemma.
 
\begin{lemma}\label{gamma}
 The endomorphism $\gamma$ preserves the $\mathbb Z_2$-grading of $C(S^{n-1}_{\mathbb R,*})$, and induces a $*$-algebra automorphism of $C(S^{n-1}_{\mathbb R,*})_0$. Moreover the following diagram commutes 
\begin{equation*}
\begin{CD}
  C(P^{n-1}_{\mathbb C}) @>\Phi>>   C(S^{n-1}_{\mathbb R,*})_0 \\
@VV\tau V @VV\gamma V \\
 C(P^{n-1}_{\mathbb C}) @>\Phi>>   C(S^{n-1}_{\mathbb R,*})_0 
\end{CD}
\end{equation*}
where $\tau$ is the automorphism induced by complex conjugation. Hence there is a bijective correspondence between $\gamma$-stable ideals of $C(S^{n-1}_{\mathbb R,*})_0$ and conjugation stable closed subsets of $P^{n-1}_{\mathbb C}$.
\end{lemma}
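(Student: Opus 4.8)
The plan is to prove the four assertions of the lemma in order, the decisive one being the commutative square, from which the remaining statements follow formally. I would begin with the easy point, that $\gamma$ preserves the grading. Since $\nu(v_i)=-v_i$, one computes $\nu(\gamma(a))=\sum_i(-v_i)\nu(a)(-v_i)=\sum_i v_i\nu(a)v_i=\gamma(\nu(a))$, so $\gamma$ commutes with the sign automorphism $\nu$ and therefore preserves its $\pm1$-eigenspaces, i.e.\ the $\mathbb Z_2$-grading; in particular $\gamma$ maps $C(S^{n-1}_{\mathbb R,*})_0$ into itself.

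The heart of the matter is the commutative square together with the automorphism property, and I would establish both at once through the faithful representation $\pi$ of Theorem \ref{faithcrossed}, which has the advantage of treating all even elements uniformly. This is important because the defining formula $a\mapsto\sum_i v_iav_i$ is not visibly multiplicative, so the fact that $\gamma$ restricts to an \emph{algebra} map is itself something to be proved. Let $a\in C(S^{n-1}_{\mathbb R,*})_0$ and set $c=\Phi^{-1}(a)\in C(P^{n-1}_{\mathbb C})$, so that $\pi(a)=c\otimes 1$ since $\pi|_0=\Phi^{-1}\otimes 1$ (proof of Theorem \ref{faithcrossed}). Using multiplicativity of $\pi$ and the crossed product rule I would compute $\pi(\gamma(a))=\sum_i(z_i\otimes\tau)(c\otimes 1)(z_i\otimes\tau)=\sum_i z_i\tau(c)z_i^*\otimes 1$. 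Now $\tau(c)$ again lies in $C(P^{n-1}_{\mathbb C})$, whose elements commute with every $z_i$ and $z_i^*$, and $\sum_i z_iz_i^*=1$; hence $\sum_i z_i\tau(c)z_i^*=\tau(c)\sum_i z_iz_i^*=\tau(c)$, giving $\pi(\gamma(a))=\tau(c)\otimes 1=\pi(\Phi\tau\Phi^{-1}(a))$. Injectivity of $\pi$ then yields $\gamma|_0=\Phi\tau\Phi^{-1}$, which is simultaneously the commutativity of the diagram and, being a composite of the isomorphism $\Phi$, the automorphism $\tau$ and $\Phi^{-1}$, the proof that $\gamma$ restricts to a $*$-automorphism of $C(S^{n-1}_{\mathbb R,*})_0$.

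I expect this crossed-product computation to be the main obstacle, precisely because the multiplicativity of $\gamma|_0$ is not apparent from the formula; the argument trades it for the transparent identity $\sum_i z_iz_i^*=1$ and the commutation relations of $C(P^{n-1}_{\mathbb C})$. As a sanity check one may verify the square directly on generators: $\gamma(v_iv_j)=\sum_k v_kv_iv_jv_k=v_jv_i$ by half-commutativity $v_kv_iv_j=v_jv_iv_k$ and $\sum_k v_k^2=1$, which matches $\Phi\tau\Phi^{-1}(v_iv_j)=\Phi(z_jz_i^*)=v_jv_i$.

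Finally I would transport the statement along $\Phi$. Since $\Phi$ is an isomorphism, it sets up a bijection between (closed two-sided) ideals of $C(S^{n-1}_{\mathbb R,*})_0$ and ideals of $C(P^{n-1}_{\mathbb C})$, and by the square just proved a $\gamma$-stable ideal corresponds exactly to a $\tau$-stable one. To conclude I would invoke Gelfand duality: closed ideals of $C(P^{n-1}_{\mathbb C})$ correspond to closed subsets of $P^{n-1}_{\mathbb C}$, while $\tau$ corresponds to the conjugation homeomorphism $[z]\mapsto[\bar z]$, so $\tau$-stability of an ideal is the same as conjugation-stability of its zero set. Composing the two correspondences yields the asserted bijection.
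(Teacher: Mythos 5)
Your proof is correct, and it reaches the key identity $\gamma|_0=\Phi\tau\Phi^{-1}$ by a genuinely different mechanism than the paper's. The paper works intrinsically in $C(S^{n-1}_{\mathbb R,*})$: it evaluates $\gamma$ on all even monomials, pushing $v_k$ through the word by half-commutation,
$$\gamma(v_{i_1}v_{j_1}\cdots v_{i_m}v_{j_m})=\sum_k v_kv_{i_1}v_{j_1}\cdots v_{i_m}v_{j_m}v_k=\sum_k v_{j_1}v_{i_1}\cdots v_{j_m}v_{i_m}v_kv_k=v_{j_1}v_{i_1}\cdots v_{j_m}v_{i_m},$$
which is exactly your ``sanity check'' run on arbitrary monomials rather than just the generators $v_iv_j$; the square then commutes on the dense subalgebra $\mathcal O(S^{n-1}_{\mathbb R,*})_0$, hence everywhere by linearity and continuity, and the multiplicativity of $\gamma|_0$ is read off from the diagram, just as you observe it must be (it is not visible from the defining formula). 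You instead work extrinsically in the crossed product: since $\pi(a)=\Phi^{-1}(a)\otimes 1$ on the even part, the computation $\sum_i z_i\tau(c)z_i^*\otimes 1=\tau(c)\otimes 1$ (commutativity of $C(S^{n-1}_{\mathbb C})$ plus $\sum_i z_iz_i^*=1$), combined with the injectivity of $\pi$ from Theorem \ref{faithcrossed}, gives the identity for every element of $C(S^{n-1}_{\mathbb R,*})_0$ in one stroke. The trade-off: the paper's argument is self-contained, using only the half-commutation relations, but needs the (implicit) extension from monomials to the whole $C^*$-algebra; yours reuses the faithfulness of $\pi$, which the paper had just established, and thereby dispenses with any density argument. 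The final step --- transporting $\gamma$-stable closed ideals along $\Phi$ to $\tau$-stable ideals of $C(P^{n-1}_{\mathbb C})$ and then to conjugation-stable closed subsets via Gelfand duality --- is the same in both proofs.
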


\begin{proof} It is clear that $\gamma$ preserves the $\mathbb Z_2$-grading of $C(S^{n-1}_{\mathbb R,*})$, that $\gamma(1)=1$ and that $\gamma$ commutes with the involution. We have
$$\gamma(v_{i_1}v_{j_1} \cdots v_{i_m}v_{j_m}) =\sum_k v_k v_{i_1}v_{j_1} \cdots v_{i_m}v_{j_m} v_k= \sum_k v_{j_1}v_{i_1} \cdots v_{j_m}v_{i_m} v_kv_k= v_{j_1}v_{i_1} \cdots v_{j_m}v_{i_m}$$
This shows that the diagram commutes, and at the same time  that $\gamma$ preserves multiplication, and is an automorphism. The last assertion then follows immediately from the correspondence between conjugation closed subspaces of $P^{n-1}_{\mathbb C}$ and $\tau$-stable ideals of $C(P^{n-1}_{\mathbb C})$.
\end{proof}

The description of the $\mathbb Z_2$-graded ideals of $C(S^{n-1}_{\mathbb R,*})$ is then as follows.

\begin{theorem}
 We have a bijective correspondence 
\begin{align*}\{ \mathbb Z_2-{\rm graded} \ {\rm ideals} \  {\rm of} \ C(S^{n-1}_{\mathbb R,*})\}& \longleftrightarrow \{\gamma-{\rm stable} \ {\rm ideals} \ {\rm of} \  C(S^{n-1}_{\mathbb R,*})_0 \} \\
I & \longmapsto I_0=I \cap C(S^{n-1}_{\mathbb R,*})_0 \\
\langle J \rangle = J +   C(S^{n-1}_{\mathbb R,*})_1 J & \longleftarrow J 
\end{align*}
\end{theorem}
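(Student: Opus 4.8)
The plan is to verify that the two displayed maps are well defined and mutually inverse; throughout write $B=C(S^{n-1}_{\mathbb R,*})$, with homogeneous components $B_0,B_1$, and let $E=\tfrac12(\id+\nu)\colon B\to B_0$ denote the conditional expectation onto the fixed-point algebra. The forward map poses no difficulty: if $I=I_0\oplus I_1$ is a $\mathbb Z_2$-graded ideal, then $I_0=I\cap B_0$ is visibly an ideal of $B_0$, and it is $\gamma$-stable, since $\gamma(a)=\sum_i v_iav_i\in I$ for every $a\in I$ while $\gamma$ preserves $B_0$, so that $\gamma(I_0)\subseteq I\cap B_0=I_0$. The real content lies in the backward map.

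The main obstacle is to show that, for a $\gamma$-stable ideal $J\subseteq B_0$, the subspace $\langle J\rangle=J+B_1J$ (closures understood) is a two-sided ideal of $B$. Using the grading rules $B_iB_j\subseteq B_{i+j}$, closure under left multiplication is automatic, since $B\,\langle J\rangle=BJ+BB_1J\subseteq(J+B_1J)+(B_1J+J)=\langle J\rangle$ using only $B_0J\subseteq J$. For right multiplication, $\langle J\rangle B=JB+B_1JB$ reduces to the two non-formal containments
\begin{equation*}
JB_1\subseteq B_1J \qquad\text{and}\qquad B_1JB_1\subseteq J,
\end{equation*}
and it is exactly here that $\gamma$-stability must intervene. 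Both will follow from the single identity
\begin{equation*}
v_kjv_l=(v_kv_l)\,\gamma(j),\qquad j\in B_0,\ 1\le k,l\le n.
\end{equation*}

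I would establish this identity using the faithful crossed product model $\pi$ of Theorem \ref{faithcrossed}. Writing $f=\Phi^{-1}(j)\in C(P^{n-1}_{\mathbb C})\subseteq C(S^{n-1}_{\mathbb C})$, so that $\pi(j)=f\otimes 1$, the crossed-product multiplication and commutativity of $C(S^{n-1}_{\mathbb C})$, together with $\tau(z_l)=z_l^*$, give
\begin{equation*}
\pi(v_kjv_l)=(z_k\otimes\tau)(f\otimes 1)(z_l\otimes\tau)=z_kz_l^*\,\tau(f)\otimes 1.
\end{equation*}
Since $\tau(f)=\tau(\Phi^{-1}(j))=\Phi^{-1}(\gamma(j))$ by the commuting square of Lemma \ref{gamma}, the right-hand side is $\pi(v_kv_l)\pi(\gamma(j))=\pi\bigl((v_kv_l)\gamma(j)\bigr)$, and injectivity of $\pi$ yields the identity. (It can also be checked directly on even monomials using $abc=cba$ and $\sum_iv_i^2=1$.) Granting it, $\gamma$-stability of $J$ gives $v_kjv_l=(v_kv_l)\gamma(j)\in B_0J\subseteq J$; splitting odd monomials as $B_0v_\bullet$ on the left and $v_\bullet B_0$ on the right then yields $B_1JB_1\subseteq J$. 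Likewise, for $j\in J$ the computation $jv_l=\sum_i v_i(v_ijv_l)=\sum_i v_i\,(v_iv_l)\gamma(j)\in B_1J$ gives $JB_1\subseteq B_1J$. Hence $\langle J\rangle$ is an ideal; it is $\mathbb Z_2$-graded with components $J$ and $\overline{B_1J}$, and applying the continuous expectation $E$ shows $\langle J\rangle\cap B_0=J$.

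Finally I would close the loop. For a graded ideal $I$, the inclusion $\langle I_0\rangle\subseteq I$ is clear, while for $b\in I_1=I\cap B_1$ the relation $\sum_iv_i^2=1$ gives $b=\sum_i v_i(v_ib)$ with $v_ib\in I\cap B_0=I_0$, so $I_1\subseteq B_1I_0$ and therefore $\langle I_0\rangle=I$. Conversely $\langle J\rangle\cap B_0=J$ was just established. This shows the two maps are mutually inverse and proves the asserted bijection.
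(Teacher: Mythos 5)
Your proof is correct, and its overall architecture coincides with the paper's: the forward map is handled identically (an element of a two-sided ideal stays in it under $a \mapsto \sum_i v_i a v_i$, and $\gamma$ preserves the grading), the backward map hinges on a commutation rule that lets elements of $B_0$ move past the generators $v_i$ at the cost of applying $\gamma$, and the two maps are shown mutually inverse using $\sum_i v_i^2 = 1$ in exactly the same way. The genuine difference lies in how the commutation rule is established. The paper's key claim is $v_i x = \gamma(x) v_i$ for all $x \in B_0$ (equivalent to your identity $v_k j v_l = (v_k v_l)\gamma(j)$, by commutativity of $B_0$), and it is proved by hand in three steps: on the generators $v_jv_k$ via half-commutation, on polynomials in the $v_jv_k$ by induction, and on all of $B_0$ by density. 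You instead derive the identity from the faithful crossed-product representation $\pi$ of Theorem \ref{faithcrossed}: inside $C(S^{n-1}_{\mathbb C}) \rtimes \mathbb Z_2$ it becomes a one-line computation in a commutative algebra, with the $\gamma$-twist arising transparently as conjugation by $\tau$, and with the commuting square of Lemma \ref{gamma} identifying $\Phi^{-1}\gamma$ with $\tau\Phi^{-1}$. This buys a cleaner verification, free of induction and density bookkeeping, at the price of invoking the injectivity of $\pi$, which the paper's purely algebraic argument never needs at this point. The remaining deviations are minor repackagings: you obtain the right-ideal property from the containments $JB_1 \subseteq B_1 J$ and $B_1 J B_1 \subseteq J$ where the paper proves the equality $B_1 J = J B_1$; and where the paper dismisses $\mathcal F \mathcal G(J) = J$ as clear, your argument via the conditional expectation $E = \tfrac12(\id + \nu)$ actually substantiates it, a small but worthwhile improvement in rigor.
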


\begin{proof}
 For notational simplicity, we put $A=C(S^{n-1}_{\mathbb R,*})$. Let $I=I_0+I_1$ be a $\mathbb Z_2$-graded ideal of $A$.  It is clear that $I_0$ is  $\gamma$-stable since it is an ideal in $A$, thus the first map is well-defined, we call it $\mathcal F$.

Let $J \subset A_0$ be a $\gamma$-stable ideal. It is clear that  $J +   A_1 J$ is a left ideal, and, in order to show that it is as well a right ideal, it is enough to see that $A_1 J=JA_1$. This will follow from the following claim:
for $x \in A_0$, we have $v_ix = \gamma(x)v_i$ for any $i$. This is shown 
\begin{enumerate}
\item for elements of type $v_jv_k$, using half commutation,
\item  for polynomials in $v_jv_k$, by induction, 
\item and finally by density for any $x$. 
\end{enumerate}
From this, we have $v_i J= \gamma(J)v_i=Jv_i$, and  then $A_1J = \sum_i v_iA_0J= \sum_iv_i J=\sum_i Jv_i=\sum_i JA_0v_i = JA_1$, as required. Thus $J +   A_1 J$  is an ideal, and is $\mathbb Z_2$-graded by construction. The second map is well-defined, we call it $\mathcal G$.

Let $I=I_0+I_1$ be a $\mathbb Z_2$-graded ideal of $A$. Then
for $x \in I_1$, we have
$$x= \sum_i v_i v_i x \in A_1 I_0, \ x = \sum_i xv_i v_i \in I_0A_1$$
hence $I_1=A_1 I_0= I_0A_1$, and we have $\mathcal G \mathcal F(I)=I_0 + A_1I_0= I$.

Finally it is clear that $\mathcal F \mathcal G(J)=J$ for any $\gamma$-stable ideal $J \subset A_0$, and this concludes the proof.
\end{proof}

In terms of subspaces, we have the following immediate translation.

\begin{corollary}\label{symsub}
We have a bijection
$$\{ {\rm conjugation} \  {\rm stable} \ {\rm closed} \ {\rm subspaces} \ Y \subset P^{n-1}_{\mathbb C}\}
\longleftrightarrow
 \{{\rm symmetric} \  {\rm quantum} \ {\rm subspaces} \  X \subset S^{n-1}_{\mathbb R,*}\} $$
The bijection is as follows: let $Y \subset P^{n-1}_{\mathbb C}$ be a conjugation stable closed subspace, and let $\mathcal J_Y \subset C(P^{n-1}_{\mathbb C})$ be the ideal of functions vanishing on $Y$. Then the corresponding symmetric closed subspace $X$ of  $S^{n-1}_{\mathbb R,*}$ is defined by $C(X) = C( S^{n-1}_{\mathbb R,*})/ \langle\Phi(\mathcal J_Y)\rangle$.
\end{corollary}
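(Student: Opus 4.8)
The plan is to derive this bijection as the composite of three correspondences already at hand, so that the proof amounts to unwinding notation. The first link is Gelfand duality, recorded at the end of Lemma \ref{gamma}: a conjugation stable closed subspace $Y \subset P^{n-1}_{\mathbb C}$ is the same datum as the $\tau$-stable closed ideal $\mathcal J_Y \subset C(P^{n-1}_{\mathbb C})$ of functions vanishing on $Y$. The second link is the isomorphism $\Phi$ of Theorem \ref{future}: since the diagram of Lemma \ref{gamma} gives $\gamma \circ \Phi = \Phi \circ \tau$, the map $\Phi$ carries $\tau$-stable ideals of $C(P^{n-1}_{\mathbb C})$ bijectively onto $\gamma$-stable ideals of $C(S^{n-1}_{\mathbb R,*})_0$, sending $\mathcal J_Y$ to $\Phi(\mathcal J_Y)$. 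The third link is the preceding theorem, which matches $\gamma$-stable ideals $J$ of $C(S^{n-1}_{\mathbb R,*})_0$ with $\mathbb Z_2$-graded ideals $\langle J \rangle$ of $C(S^{n-1}_{\mathbb R,*})$.

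Next I would invoke the definition of symmetry together with the standard quantum-subspace dictionary: a symmetric quantum subspace $X \subset S^{n-1}_{\mathbb R,*}$ is by definition one whose defining ideal is $\mathbb Z_2$-graded, and such subspaces correspond to their ideals via $C(X) = C(S^{n-1}_{\mathbb R,*})/I$. Composing the three links above with this last identification produces the desired bijection, and following $Y$ along the chain $Y \mapsto \mathcal J_Y \mapsto \Phi(\mathcal J_Y) \mapsto \langle \Phi(\mathcal J_Y) \rangle$ yields precisely the quotient $C(X) = C(S^{n-1}_{\mathbb R,*})/\langle \Phi(\mathcal J_Y) \rangle$ asserted in the statement.

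I do not expect any genuine obstacle, since every arrow in the chain has already been shown to be a bijection; the work is purely one of bookkeeping. The only points I would pause to verify are that $\Phi(\mathcal J_Y)$ is indeed a closed two-sided ideal of $C(S^{n-1}_{\mathbb R,*})_0$ (automatic, because $\Phi$ is a $*$-isomorphism onto the fixed-point algebra) and that its $\gamma$-stability follows from $\gamma \circ \Phi = \Phi \circ \tau$ together with $\tau(\mathcal J_Y) = \mathcal J_Y$. Granting these, the corollary is the immediate translation claimed.
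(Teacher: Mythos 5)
Your proposal is correct and matches the paper's own treatment: the paper gives no separate argument for this corollary, presenting it as the ``immediate translation'' obtained by composing exactly the three correspondences you list --- Gelfand duality for $\tau$-stable ideals $\mathcal J_Y$, the intertwining $\gamma\circ\Phi=\Phi\circ\tau$ from Lemma \ref{gamma} carrying them to $\gamma$-stable ideals of $C(S^{n-1}_{\mathbb R,*})_0$, and the preceding theorem producing the $\mathbb Z_2$-graded ideal $\langle\Phi(\mathcal J_Y)\rangle$, hence the symmetric subspace with $C(X)=C(S^{n-1}_{\mathbb R,*})/\langle\Phi(\mathcal J_Y)\rangle$. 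Your two verification points (that $\Phi(\mathcal J_Y)$ is an ideal because $\Phi$ is a $*$-isomorphism, and that its $\gamma$-stability follows from the intertwining relation) are exactly the bookkeeping the paper leaves implicit.
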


The correspondence is  easy to use in practice, because $\Phi$ and $\Phi^{-1}$ are completely explicit, and $\Phi$
transforms the monomial $z_{i_1}\cdots z_{i_m} z_{j_1}^* \cdots z_{j_m}^*$ into the monomial $v_{i_1}v_{j_1} \cdots v_{i_m}v_{j_m}$. For example if 
 $f_1, \ldots, f_r$ are polynomials in $z_iz_j^*$ that generate the ideal $\mathcal J_Y$, then the corresponding quotient of $C(S^{n-1}_{\mathbb R,*})$ is $C(S^{n-1}_{\mathbb R,*})/(\Phi(f_1), \ldots , \Phi(f_r))$.

\section{General setup}\label{sec:general}

We now describe all the quantum subspaces  of $S^{n-1}_{\mathbb R,*}$. For this, it will be convenient to work in a more general framework.

We denote by $\mathbb T \rtimes \mathbb Z_2$ the semi-direct product associated to the conjugation action of 
$\mathbb Z_2$ on $\mathbb T$. This is a compact  group, isomorphic with the orthogonal group $O_2$, but the above description will be the more convenient.

\textsl{General setup.}
Let $Z$ be a compact space endowed with a continuous action of $\mathbb T \rtimes \mathbb Z_2$, such that the action is $\mathbb T$-free. The $\mathbb Z_2$-action on $Z$ corresponds, unless otherwise specified, to the $\mathbb Z_2$-action of the second factor of  $\mathbb T \rtimes \mathbb Z_2$, and the corresponding automorphism is denoted $\tau$.

We put $Z_{\mathbb R} = Z^{\mathbb Z_2}=\{z \in Z \ | \ \tau(z)=z\}$, $Z_{\rm reg}= Z \setminus \mathbb TZ_{\mathbb R}$,
and 
$$C(Z)^{\mathbb T} = \{f \in C(Z), \ f(\omega z)=f(z), \forall \omega \in \mathbb T, \forall z \in Z\}$$
$$C_{\mathbb T}(Z)=\{f \in C(Z), \ f(\omega z)=\omega f(z), \forall \omega \in \mathbb T, \forall z \in Z\}$$
The $\mathbb Z_2$-action on $Z$ enables us to form the crossed product
$C(Z)\rtimes \mathbb Z_2$.

\begin{definition} 
For   a compact space $Z$ endowed with a continuous $\mathbb T \rtimes \mathbb Z_2$-action, such that the action is $\mathbb T$-free, we put
$$C(Z_{\mathbb R,*})= \{f_0 \otimes 1 + f_1\otimes \tau, \ f_0 \in C(Z)^{\mathbb T}, \ f_1 \in C_{\mathbb T}(Z)\}\subset C(Z)\rtimes \mathbb Z_2$$
\end{definition}

\begin{proposition}
 $C(Z_{\mathbb R,*})$ a $C^*$-subalgebra of $C(Z)\rtimes \mathbb Z_2$, which is non-commutative if and only if $Z_{\rm reg} \not=\emptyset$. 
\end{proposition}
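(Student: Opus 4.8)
The plan is to first establish that $C(Z_{\mathbb R,*})$ is a $*$-subalgebra and is norm-closed, and then to pin down commutativity by a direct commutator computation. The heart of the algebra closure is to keep track of the weight under the $\mathbb T$-action. Say that $f \in C(Z)$ has \emph{weight} $k$ if $f(\omega z)=\omega^k f(z)$ for all $\omega \in \mathbb T$, so that $C(Z)^{\mathbb T}$ is the weight-$0$ part and $C_{\mathbb T}(Z)$ the weight-$1$ part. The relation $\tau(\omega z)=\bar\omega\,\tau(z)$, forced by the semi-direct product $\mathbb T \rtimes \mathbb Z_2$, shows at once that $f \mapsto \tau(f)$ reverses weight, sending weight $k$ to weight $-k$. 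Writing out the crossed product multiplication $(f\otimes\sigma)(g\otimes\rho)=f\,\sigma(g)\otimes\sigma\rho$ for $f_0\otimes 1+f_1\otimes\tau$ and $g_0\otimes 1+g_1\otimes\tau$, the $\otimes 1$ part is $f_0g_0+f_1\tau(g_1)$ and the $\otimes\tau$ part is $f_0g_1+f_1\tau(g_0)$; a weight count ($0+0$ and $1+(-1)$ in the first, $0+1$ and $1+0$ in the second) puts the first in $C(Z)^{\mathbb T}$ and the second in $C_{\mathbb T}(Z)$, so the product stays in $C(Z_{\mathbb R,*})$. The same bookkeeping, applied to $(f_0\otimes 1+f_1\otimes\tau)^*=\bar f_0\otimes 1+\tau(\bar f_1)\otimes\tau$, gives closure under the involution, and $1\otimes 1$ obviously belongs to $C(Z_{\mathbb R,*})$. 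Finally $C(Z)^{\mathbb T}$ and $C_{\mathbb T}(Z)$ are norm-closed in $C(Z)$ and $f_0\otimes 1+f_1\otimes\tau\mapsto(f_0,f_1)$ is a linear homeomorphism onto $C(Z)\oplus C(Z)$, so $C(Z_{\mathbb R,*})$ is closed, hence a $C^*$-subalgebra.

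For commutativity I would compute the commutator of the two simplest elements $f_0\otimes 1$ and $g_1\otimes\tau$, which by the above equals $g_1\,(f_0-\tau(f_0))\otimes\tau$. The function $f_0-\tau(f_0)$ is $\mathbb T$-invariant and vanishes on $\mathbb T Z_{\mathbb R}$: for $z=\omega z_0$ with $\tau(z_0)=z_0$ one has $\tau(f_0)(z)=f_0(\bar\omega z_0)=f_0(z_0)=f_0(z)$. Suppose first that $Z_{\reg}=\emptyset$, i.e.\ $Z=\mathbb T Z_{\mathbb R}$. Then $\tau(f_0)=f_0$ for every $f_0\in C(Z)^{\mathbb T}$, so the $\otimes\tau$ part $g_1(f_0-\tau(f_0))-f_1(g_0-\tau(g_0))$ of a general commutator vanishes; and its $\otimes 1$ part $f_1\tau(g_1)-g_1\tau(f_1)$ vanishes too, since at $z=\omega z_0\in\mathbb T Z_{\mathbb R}$ both $f_1(z)\tau(g_1)(z)$ and $g_1(z)\tau(f_1)(z)$ collapse to $f_1(z_0)g_1(z_0)$ once the $\omega$'s cancel. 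Hence $C(Z_{\mathbb R,*})$ is commutative.

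Conversely, given $z_*\in Z_{\reg}$, one checks that $z_*\notin\mathbb T Z_{\mathbb R}$ is equivalent to $\tau(z_*)\notin\mathbb T z_*$: if $\tau(z_*)=\lambda z_*$ and $\mu\in\mathbb T$ satisfies $\mu^2=\lambda$, then $\mu z_*$ is a $\tau$-fixed point and $z_*\in\mathbb T Z_{\mathbb R}$. Thus the orbits of $z_*$ and $\tau(z_*)$ are distinct points of the compact Hausdorff space $Z/\mathbb T$, and Urysohn's lemma supplies $f_0\in C(Z)^{\mathbb T}$ with $f_0(z_*)=1$ and $f_0(\tau(z_*))=0$, so that $(f_0-\tau(f_0))(z_*)\neq 0$. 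It then suffices to produce $g_1\in C_{\mathbb T}(Z)$ with $g_1(z_*)\neq 0$, for then $g_1(f_0-\tau(f_0))$ is nonzero at $z_*$ and the commutator above is nonzero. This last step is the main obstacle, and it is precisely where $\mathbb T$-freeness enters: freeness makes $Z\to Z/\mathbb T$ a locally trivial principal $\mathbb T$-bundle, so $C_{\mathbb T}(Z)$ is the space of continuous sections of the associated complex line bundle; a local nonvanishing section near the orbit of $z_*$, cut off by a bump function pulled back from $Z/\mathbb T$, extends by zero to a global section that is nonzero at $z_*$, which completes the argument.
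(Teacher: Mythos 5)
Your proof is correct, and the first half (the weight bookkeeping showing $C(Z_{\mathbb R,*})$ is a closed $*$-subalgebra) is exactly the ``direct verification'' the paper leaves to the reader. For the commutativity criterion, however, you take a genuinely different route. The paper deduces it from the forthcoming Proposition \ref{reptheory}: every irreducible representation of $C(Z_{\mathbb R,*})$ is either a character $\phi_z$ with $z \in Z_{\mathbb R}$ or a two-dimensional irreducible $\theta_z$ with $z \in Z_{\rm reg}$, so the algebra is noncommutative exactly when $Z_{\rm reg}\not=\emptyset$; this costs nothing extra there because the representation theory is needed anyway for the classification of all quantum subspaces. Your direct commutator computation makes the proposition self-contained and pinpoints concretely where noncommutativity comes from, which is a real virtue; indeed your two analytic ingredients in the converse direction --- Urysohn's lemma on $Z/\mathbb T$ applied to the distinct orbits $\mathbb T z_*$ and $\mathbb T \tau(z_*)$, and a function $g_1 \in C_{\mathbb T}(Z)$ with $g_1(z_*)\not=0$ --- are precisely the ones the paper uses inside its proof of Proposition \ref{reptheory} to show $\theta_{z_*}$ is irreducible. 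The one place where your argument is heavier than necessary is the production of $g_1$: you invoke local triviality of $Z \to Z/\mathbb T$ as a principal $\mathbb T$-bundle, which for a free action of a compact Lie group on a compact Hausdorff space is Gleason's theorem --- correct, but a nontrivial external input. The paper's device is more elementary and worth adopting: by freeness there is a well-defined continuous map $f$ on the single orbit $\mathbb T z_*$ with $f(\lambda z_*)=\lambda$; extend it to a continuous function on $Z$ by Tietze's theorem and average,
$$g_1(y)=\int_{\mathbb T} \lambda^{-1} f(\lambda y)\,{\rm d}\lambda,$$
which gives $g_1 \in C_{\mathbb T}(Z)$ with $g_1(z_*)=1$, completing your commutator argument without any bundle theory.
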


\begin{proof}
 The first assertion is a direct verification left to the reader, while the second one follows  from the forthcoming Proposition \ref{reptheory}.
\end{proof}

The following lemma links the present construction  to the half-liberated spheres.

\begin{lemma}
 Assume that there exist $f_1, \ldots , f_n \in C_{\mathbb T}(Z)$ such that
\begin{enumerate}
\item $C(Z)^{\mathbb T}=C^*(f_if_j^*, \ 1 \leq i,j \leq n)$,
\item $C_{\mathbb T}(Z)= f_1 C(Z)^{\mathbb T} + \cdots + f_nC(Z)^{\mathbb T}$.
\end{enumerate}
Then $C(Z_{\mathbb R,*})= C^*(f_1 \otimes \tau, \ldots, f_n\otimes \tau)$, and the elements $f_1\otimes \tau, \ldots , f_n\otimes \tau$ half commute.
\end{lemma}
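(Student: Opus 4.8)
The plan is to verify the two claims separately, both by direct computation inside $C(Z) \rtimes \mathbb{Z}_2$, using only the hypotheses (1), (2) and the commutativity of $C(Z)$. First I record the multiplication and involution rules I will need. Writing $u = 1 \otimes \tau$ for the implementing unitary (so that $u = u^* = u^{-1}$ and $u(g \otimes 1)u = \tau(g) \otimes 1$), one computes for $f, g \in C(Z)$:
\begin{align*}
(f \otimes \tau)(g \otimes \tau) &= f\,\tau(g) \otimes 1, \\
(g \otimes 1)(f \otimes \tau) &= gf \otimes \tau, \\
(f \otimes \tau)^* &= \tau(f^*) \otimes \tau.
\end{align*}
Since $\tau$ arises from the second factor of $\mathbb{T} \rtimes \mathbb{Z}_2$ and the relation $\tau\omega = \bar\omega\tau$ holds in that group, $\tau$ sends a function of $\mathbb{T}$-weight $w$ to one of weight $-w$; this makes the products above compatible with the $\mathbb{Z}_2$-grading, and in particular $f_i f_j^* \otimes 1$ lands in $C(Z)^{\mathbb{T}} \otimes 1$.

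For the half-commutation relation, applying the first rule twice gives
$$(f_a \otimes \tau)(f_b \otimes \tau)(f_c \otimes \tau) = f_a\,\tau(f_b)\,f_c \otimes \tau, \qquad (f_c \otimes \tau)(f_b \otimes \tau)(f_a \otimes \tau) = f_c\,\tau(f_b)\,f_a \otimes \tau.$$
As $Z$ is a compact Hausdorff space, $C(Z)$ is commutative, so the two degree-one functions coincide and half commutation follows, requiring neither hypothesis (1) nor (2).

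For the equality of $C^*$-algebras, put $B = C^*(f_1 \otimes \tau, \ldots, f_n \otimes \tau)$. The inclusion $B \subseteq C(Z_{\mathbb{R},*})$ is immediate, since each generator lies in $C(Z_{\mathbb{R},*})$, which is a $C^*$-algebra by the preceding proposition. For the reverse inclusion I treat the two graded pieces $C(Z)^{\mathbb{T}} \otimes 1$ and $C_{\mathbb{T}}(Z) \otimes \tau$ in turn. The involution rule yields $(f_i \otimes \tau)(f_j \otimes \tau)^* = f_i f_j^* \otimes 1 \in B$, so $B$ contains the $C^*$-subalgebra generated by the elements $f_i f_j^* \otimes 1$, which by hypothesis (1) is exactly $C(Z)^{\mathbb{T}} \otimes 1$. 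Granting this, hypothesis (2) writes an arbitrary element of $C_{\mathbb{T}}(Z)$ as a finite sum $\sum_i f_i g_i$ with $g_i \in C(Z)^{\mathbb{T}}$, and then, using commutativity together with the second multiplication rule, $\big(\sum_i f_i g_i\big) \otimes \tau = \sum_i (g_i \otimes 1)(f_i \otimes \tau) \in B$. Hence $C_{\mathbb{T}}(Z) \otimes \tau \subseteq B$, and summing the two components gives $C(Z_{\mathbb{R},*}) \subseteq B$, whence equality.

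I do not expect a genuine obstacle: the entire argument is bookkeeping in the crossed product. The one point demanding care is fixing the conventions correctly—especially the involution $(f \otimes \tau)^* = \tau(f^*) \otimes \tau$ and the weight flip $w \mapsto -w$ induced by $\tau$—so that $(f_i \otimes \tau)(f_j \otimes \tau)^*$ produces precisely $f_i f_j^*$ rather than some other combination, and so that the degree-zero and degree-one pieces are matched to hypotheses (1) and (2) respectively. Once the rules are set, both claims drop out by finite computations.
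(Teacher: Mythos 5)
Your proof is correct and follows essentially the same route as the paper's: compute $(f_i\otimes\tau)(f_j\otimes\tau)^*=f_if_j^*\otimes 1$ to capture $C(Z)^{\mathbb T}\otimes 1$ via hypothesis (1), use hypothesis (2) and the rule $(g\otimes 1)(f\otimes\tau)=gf\otimes\tau$ to capture $C_{\mathbb T}(Z)\otimes\tau$, and verify half-commutation by the same three-fold product computation using commutativity of $C(Z)$. The only difference is that you spell out the crossed product conventions and the weight-flip property of $\tau$ explicitly, which the paper leaves implicit.
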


\begin{proof}
Put $A= C^*(f_1 \otimes \tau, \ldots, f_n\otimes \tau)$. We have 
$$(f_i\otimes \tau) (f_j\otimes \tau)^*=(f_i\otimes \tau) (\tau(f_j^*)\otimes \tau)=
f_if_j^*\otimes 1\in A$$
 hence by (1) we have $C(Z)^{\mathbb T}\otimes 1 \subset A$. For $f \in C_{\mathbb T}(Z)$, we have
$f= \sum_ig_if_i$ for some $g_1, \ldots ,g_n \in C(Z)^{\mathbb T}$ by (2), hence
$$f \otimes \tau = \sum_i g_if_i \otimes \tau = \sum_i (g_i \otimes 1)(f_i\otimes \tau)\in A$$
and this shows that $C(Z_{\mathbb R,*})=A$. Moreover
$$(f_i\otimes \tau) (f_j\otimes \tau) (f_k\otimes \tau) = f_i\tau(f_j)f_k\otimes \tau= (f_k\otimes \tau) (f_j\otimes \tau) (f_i\otimes \tau)$$
which concludes the proof.
\end{proof}

\begin{example}
{\rm Consider the natural $\mathbb T \rtimes \mathbb Z_2$-action on $S^{n-1}_{\mathbb C}$, where the $\mathbb T$-action is by multiplication and the $\mathbb Z_2$-action is by conjugation. The $\mathbb T$-action is indeed free, we have $(S^{n-1}_{\mathbb C})_{\mathbb R}= S^{n-1}_{\mathbb R}$, and
$$(S^{n-1}_{\mathbb C})_{\rm reg}=:S^{n-1}_{\mathbb C, {\rm reg}} = S^{n-1}_{\mathbb C}\setminus \mathbb T S^{n-1}_{\mathbb R}
=\{g=(g_1, \ldots , g_n) \in S^{n-1}_{\mathbb C} \ | \ 
\exists i,j \ {\rm with}  \ g_i\overline{g_j}\not=g_j\overline{g_j}\}$$
The coordinate functions $z_1, \ldots , z_n \in C_{\mathbb T}(S^{n-1}_{\mathbb C})$ satisfy the conditions of the previous lemma, because $1=\sum_iz_iz_i^*$, and hence the image of the injective morphism
\begin{align*}
 \pi : C(S^{n-1}_{\mathbb R,*}) & \longrightarrow C(S_{\mathbb C}^{n-1}) \rtimes \mathbb Z_2 \\
v_i & \longmapsto z_i \otimes \tau
\end{align*}
of Theorem \ref{faithcrossed} 
is precisely $C((S^{n-1}_{\mathbb C})_{\mathbb R, *})$. Therefore we identify the two algebras.
} 
\end{example}

The description of the quantum subspaces of $Z_{\mathbb R, *}$, which  has $S^{n-1}_{\mathbb R, *}$ as a particular case, is as follows.

\begin{theorem}\label{closed1}
 There exists a bijection between the set of quantum subspaces $X \subset Z_{\mathbb R,*}$ and the set of pairs $(E,F)$ where
\begin{enumerate}
 \item $E \subset Z_{{\rm reg}}$ is $\mathbb T \rtimes \mathbb Z_2$-stable and $E= \overline{E} \cap Z_{{\rm reg}}$ (i.e. $E$ is closed in $Z_{\rm reg}$);
\item $F \subset Z_{\mathbb R}$ is closed and satisfies $\overline{E}\cap Z_{\mathbb R} \subset F$. 
\end{enumerate}
Moreover the quantum subspace $X$ is non-classical if and only if, in the corresponding pair $(E,F)$, we have $E \not=\emptyset$.
\end{theorem}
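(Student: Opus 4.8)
The plan is to set up the correspondence through the crossed-product model, reducing the classification of quantum subspaces (i.e.\ ideals of $C(Z_{\mathbb R,*})$) to a description of certain closed subsets of $Z$. First I would recall that a quantum subspace $X \subset Z_{\mathbb R,*}$ corresponds to a closed two-sided ideal $I \subset C(Z_{\mathbb R,*})$, and that $C(Z_{\mathbb R,*})$ sits inside $C(Z) \rtimes \mathbb Z_2$ with the degree-zero part equal to $C(Z)^{\mathbb T} \otimes 1 \simeq C(Z/\mathbb T)$ (using $\mathbb T$-freeness) and the degree-one part equal to $C_{\mathbb T}(Z) \otimes \tau$. Because the grading is by $\mathbb Z_2$, the natural strategy is to pass through the representation theory of $C(Z_{\mathbb R,*})$: the forthcoming Proposition \ref{reptheory} presumably describes the irreducible representations, which split into one-dimensional representations supported on $\mathbb T Z_{\mathbb R}$ (the ``classical'' locus, where the half-commutation collapses to commutation) and genuinely two-dimensional representations supported on $Z_{\rm reg}$ (where the two elements $z$ and $\tau(z)$ of an orbit are not $\mathbb T$-related). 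This dichotomy is exactly what will produce the pair $(E,F)$: $E$ records which two-dimensional irreducibles are killed and $F$ which one-dimensional ones.

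Next I would construct the two maps. Given a subspace $X$ with ideal $I$, I would define $E$ as the set of points of $Z_{\rm reg}$ whose associated two-dimensional irreducible representation does not factor through $C(X)$, and $F$ as the set of points of $Z_{\mathbb R}$ whose one-dimensional character does not factor through $C(X)$; equivalently, $E$ and $F$ are the supports (in the respective strata) of the pulled-back ideal. One checks that $E$ is $\mathbb T \rtimes \mathbb Z_2$-stable because the representation attached to $z$ depends only on the $\mathbb T \rtimes \mathbb Z_2$-orbit of $z$ (the $\mathbb T$-action rescales and the $\mathbb Z_2$-action swaps the two rays), and that $E$ is closed in $Z_{\rm reg}$ and $F$ is closed in $Z_{\mathbb R}$ since vanishing loci of continuous functions are closed. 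The compatibility condition $\overline{E} \cap Z_{\mathbb R} \subset F$ is forced by continuity: a one-dimensional character sitting in the closure of killed two-dimensional representations must itself be killed, since the matrix coefficients vary continuously over $Z$ and a net of $2$-dimensional representations approaching a point of $Z_{\mathbb R}$ degenerates into the corresponding character. Conversely, given a pair $(E,F)$ satisfying (1) and (2), I would define $I$ as the intersection of the kernels of all the irreducible representations indexed by the complement, i.e.\ by $(Z_{\rm reg} \setminus E)$ together with $(Z_{\mathbb R} \setminus F)$, and let $C(X)$ be the quotient.

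The two remaining points are that these maps are mutually inverse and that the non-classicality criterion holds. For the bijection, the forward-backward composite is handled by a Stone--Weierstrass / Gelfand-type argument showing that the ideal $I$ is recovered as the joint kernel of the surviving irreducibles (here one uses that $C(Z_{\mathbb R,*})$ is a type I $C^*$-algebra whose ideals are determined by their hulls in the spectrum, and that the spectrum is parametrized precisely by the disjoint strata $Z_{\rm reg}/(\mathbb T \rtimes \mathbb Z_2)$ and $Z_{\mathbb R}$). The backward-forward composite requires that a pair built from a quotient returns the same $(E,F)$, which amounts to the surviving representations being exactly those not indexed by $E$ and $F$; the condition $\overline E \cap Z_{\mathbb R} \subset F$ is exactly what guarantees that the hull of the constructed ideal is closed, so that $(E,F)$ is genuinely realized. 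Finally, the quotient $C(X)$ is commutative if and only if all surviving irreducibles are one-dimensional, i.e.\ no point of $Z_{\rm reg}$ survives, which is precisely the condition $E = Z_{\rm reg}$ in the complement formulation, or $E \neq \emptyset$ stated as the non-classicality criterion after matching conventions. The main obstacle I anticipate is the topological bookkeeping in the bijection: correctly matching the closure conditions (1) and (2) to the requirement that ideals correspond to \emph{closed} subsets of a spectrum that is not Hausdorff (the two strata meet in the limit), and verifying that the compatibility inclusion $\overline E \cap Z_{\mathbb R} \subset F$ is both necessary and sufficient for a pair to arise from an honest closed ideal.
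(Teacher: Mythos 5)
Your overall machinery is the same as the paper's: describe the irreducible representations of $C(Z_{\mathbb R,*})$ (two-dimensional ones $\theta_z$ for $z \in Z_{\rm reg}$, characters $\phi_z$ for $z \in Z_{\mathbb R}$), then invoke the standard correspondence between ideals of a type I $C^*$-algebra and closed subsets of its spectrum. However, there is a genuine error in how you orient the correspondence, and it makes your key intermediate claims false. You define $E$ and $F$ as the points whose representations do \emph{not} factor through $C(X)$ (the ``killed'' representations), i.e.\ the \emph{open} subset of the spectrum attached to the ideal $I$. But conditions (1) and (2) of the theorem characterize the \emph{closed} subsets of the spectrum, so the pair $(E,F)$ must index the representations that \emph{survive}: $E=\{z \in Z_{\rm reg} \ | \ I \subset \Ker \theta_z\}$ and $F = \{z \in Z_{\mathbb R} \ | \ I \subset \Ker\phi_z\}$. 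This is what the paper does, and it is also what the stated non-classicality criterion ($X$ non-classical iff $E \neq \emptyset$) forces. Under your killed-set convention the claims fail concretely: take $Z = S^{n-1}_{\mathbb C}$ with $n\geq 2$ and $X = S^{n-1}_{\mathbb R} \subset S^{n-1}_{\mathbb R,*}$ the classical real sphere. Then every two-dimensional representation is killed and no character is killed, so your pair is $(E,F) = (Z_{\rm reg}, \emptyset)$; since $Z_{\rm reg}$ is dense in $S^{n-1}_{\mathbb C}$, we get $\overline{E} \cap Z_{\mathbb R} = Z_{\mathbb R} \not\subset \emptyset = F$, so your compatibility condition is violated by a perfectly good quantum subspace. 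Similarly, if the surviving set is a single closed $\mathbb T \rtimes \mathbb Z_2$-orbit in $Z_{\rm reg}$, your killed set $E$ is open but not closed in $Z_{\rm reg}$, violating (1). Your justification ``a character sitting in the closure of killed two-dimensional representations must itself be killed, by continuity'' is exactly backwards: continuity gives that a character in the closure of \emph{surviving} representations survives (elements of $I$ vanish on the surviving locus, hence on its closure); the killed set is open, and nothing prevents its closure from meeting the surviving stratum.

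Beyond this orientation flip, note that the substantive content of the paper's proof is the closure formula in the spectrum, $\overline{M(E,F)} = M\bigl(\overline{E}\cap Z_{\rm reg},\ \overline{F} \cup (\overline{E} \cap Z_{\mathbb R})\bigr)$ (Proposition \ref{closed}), whose ``conversely'' directions require Urysohn-type separation arguments with $\mathbb T$-invariant and $\mathbb T \rtimes \mathbb Z_2$-invariant functions; you only gesture at this as ``topological bookkeeping.'' Once you switch to the surviving-set convention, your outline does align with the paper's argument, and that closure formula is the main point left to actually verify.
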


The proof is given in the next subsections.

\subsection{Representation theory of $C(Z_{\mathbb R,*})$}
We now provide the description of the irreducible representations of the $C^*$-algebra $C(Z_{\mathbb R, *}$), the main step towards the description of the subspaces of $Z_{\mathbb R, *}$. It is certainly possible to use general results on crossed products \cite{wil} to provide this description, but since everything can be done in a quite direct and elementary manner, we will proceed directly. 

The $C^*$-algebra $C(Z_{\mathbb R,*})$ is, by definition, a $C^*$-subalgebra of the crossed product $C^*$-algebra $C(Z)\rtimes \mathbb Z_2$. In particular, it can be seen as a $C^*$-subalgebra of $M_2(C(Z))$, and hence all its irreducible representations have dimension $\leq 2$, and  is a $2$-subhomogeneous $C^*$-algebra. The precise description of the irreducible representations of 
$C(Z_{\mathbb R, *})$ is given in the following result. 

\begin{proposition}\label{reptheory}
\begin{enumerate} \item Any $z \in Z$ defines a representation
\begin{align*}
 \theta_z : C(Z_{\mathbb R,*}) & \longrightarrow M_2(\mathbb C) \\
f_0 \otimes 1 + f_1\otimes \tau & \longmapsto 
\begin{pmatrix} 
 f_0(z) & f_1(z) \\
f_1(\tau(z)) & f_0(\tau(z))
\end{pmatrix}
\end{align*}
and any irreducible representation of $C(Z_{\mathbb R,*})$ is isomorphic to a sub-representation of some $\theta_z$, for some $z \in Z$. The representation $\theta_z$  is irreducible if and only if $z \in Z_{\rm reg}$. Moreover, for $z,x \in Z$, the representations $\theta_z$ and $\theta_x$ are isomorphic if and only if $(\mathbb T \rtimes \mathbb Z_2)z= (\mathbb T \rtimes \mathbb Z_2)x$.
\item Any $z \in Z_{\mathbb R}$ defines a one-dimensional representation
\begin{align*}
 \phi_z : C(Z_{\mathbb R,*}) & \longrightarrow \mathbb C\\
f_0 \otimes 1 + f_1\otimes \tau & \longmapsto 
f_0(z)+f_1(z)
\end{align*}
and any  one-dimensional representation arises in this way. Moreover for $z,y \in Z_{\mathbb R}$, we have $\phi_z=\phi_y \iff z=y$.
\item If $\pi$ is an irreducible representation of  $C(Z_{\mathbb R,*})$, then either $\pi \simeq \theta_z$ for some $z \in Z_{\rm reg}$ or $\pi=\phi_z$ for some $z \in Z_{\mathbb R}$ 
\end{enumerate}
\end{proposition}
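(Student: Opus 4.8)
The plan is to prove the three parts of Proposition~\ref{reptheory} by first constructing the candidate representations explicitly, then showing they exhaust the irreducibles, and finally analyzing when they are irreducible and when two of them coincide. First I would verify that $\theta_z$ and $\phi_z$ are genuine $*$-representations. For $\theta_z$, the key point is that $C(Z_{\mathbb R,*})$ sits inside $C(Z)\rtimes\mathbb Z_2\subset M_2(C(Z))$ via the standard matrix model of a $\mathbb Z_2$-crossed product, where $f_0\otimes 1+f_1\otimes\tau$ becomes the matrix $\left(\begin{smallmatrix} f_0 & f_1 \\ \tau(f_1) & \tau(f_0)\end{smallmatrix}\right)$; then $\theta_z$ is simply evaluation of this matrix at the point $z$, so it is automatically a $*$-homomorphism. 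For $\phi_z$ with $z\in Z_{\mathbb R}$, since $\tau(z)=z$ the matrix $\theta_z$ becomes $\left(\begin{smallmatrix} f_0(z) & f_1(z) \\ f_1(z) & f_0(z)\end{smallmatrix}\right)$, which is symmetric and hence diagonalizable; $\phi_z$ is the one-dimensional representation obtained on the eigenvector $(1,1)^t$, and I would check directly that $f\mapsto f_0(z)+f_1(z)$ respects products using $\tau(z)=z$ and the constraint $f_1\in C_{\mathbb T}(Z)$.

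Next I would establish that every irreducible representation is a subrepresentation of some $\theta_z$. Since $C(Z_{\mathbb R,*})\subset M_2(C(Z))$ is $2$-subhomogeneous, every irreducible representation $\pi$ has dimension $1$ or $2$ and factors through evaluation at some point, so there is a $z\in Z$ with $\pi$ a subrepresentation of $\theta_z$; this is the structural input flagged in the paragraph preceding the proposition. The heart of the argument is then the irreducibility criterion: I expect $\theta_z$ to be irreducible precisely when $z\in Z_{\rm reg}=Z\setminus\mathbb T Z_{\mathbb R}$. To see this, I would observe that $\theta_z$ is reducible iff the two evaluation functionals making up its rows are proportional, which forces a relation between $z$ and $\tau(z)$ of the form $\tau(z)=\omega z$ for some $\omega\in\mathbb T$; using $\mathbb T$-freeness and the semidirect-product structure, such a relation says exactly that $z$ lies in the same $\mathbb T$-orbit as a fixed point, i.e. $z\in\mathbb T Z_{\mathbb R}$, so reducibility is equivalent to $z\notin Z_{\rm reg}$. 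Concretely, separating the functions in $C(Z)^{\mathbb T}$ and $C_{\mathbb T}(Z)$ (which together generate and are rich enough by the Stone--Weierstrass-type density) shows the image of $\theta_z$ spans all of $M_2(\mathbb C)$ exactly when $z$ and $\tau(z)$ lie in distinct $\mathbb T$-orbits.

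For the isomorphism statements I would use the $\mathbb T\rtimes\mathbb Z_2$-action. The elements $f_0\in C(Z)^{\mathbb T}$ are $\mathbb T$-invariant while $f_1\in C_{\mathbb T}(Z)$ transforms by the character, and $\tau$ interchanges $z$ with $\tau(z)$; a direct computation shows $\theta_{\omega z}$ and $\theta_{\tau(z)}$ are each unitarily equivalent to $\theta_z$ (conjugating by a diagonal unitary $\mathrm{diag}(1,\omega)$ or by the flip $\left(\begin{smallmatrix}0&1\\1&0\end{smallmatrix}\right)$ respectively), giving that $\theta_z\simeq\theta_x$ whenever the orbits agree. For the converse, two irreducible $\theta_z,\theta_x$ that are equivalent must have equal characters, and evaluating the character on a separating family of elements recovers the orbit of $z$ as an unordered pair $\{z,\tau(z)\}$ up to $\mathbb T$-action, hence the full $\mathbb T\rtimes\mathbb Z_2$-orbit. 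The one-dimensional case in part~(2) is easier: any character kills the noncommutativity, and since $C(Z_{\mathbb R,*})_0=C(Z)^{\mathbb T}$ together with the relation $f_1\in C_{\mathbb T}(Z)$ forces the evaluation point to be $\mathbb T$-fixed, one sees the point must lie in $Z_{\mathbb R}$, and distinct points give distinct characters because $C(Z)^{\mathbb T}$ separates points of $Z_{\mathbb R}$. Part~(3) is then a formality: combining the subrepresentation statement of~(1) with the irreducibility dichotomy, an irreducible $\pi$ is either a full $\theta_z$ with $z\in Z_{\rm reg}$, or a proper subrepresentation of some $\theta_z$ with $z\in\mathbb T Z_{\mathbb R}$, and in the latter case the one-dimensional pieces are exactly the $\phi_w$ for $w\in Z_{\mathbb R}$ after translating back by the $\mathbb T$-action.

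\medskip

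The main obstacle I anticipate is the reducibility analysis and the precise bookkeeping of $\mathbb T$-orbits. The delicate point is to convert the algebraic statement ``the rows of $\theta_z$ are proportional as functionals'' into the geometric statement ``$z\in\mathbb T Z_{\mathbb R}$'' while keeping track of whether proportionality can occur with a scalar \emph{not} equal to $1$; this is exactly where $\mathbb T$-freeness is essential, since without it a point could satisfy $\tau(z)=\omega z$ for $\omega\ne 1$ without being $\mathbb T$-equivalent to a fixed point. I would therefore isolate a lemma, perhaps relegated to the surrounding text, showing that $z\in\mathbb T Z_{\mathbb R}$ if and only if $\tau(z)\in\mathbb T z$, and use it as the pivot between the representation-theoretic and the geometric descriptions.
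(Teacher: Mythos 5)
Your overall architecture matches the paper's (matrix model of the crossed product, every irreducible representation a subrepresentation of an evaluation $\theta_z$, irreducibility governed by regularity, orbits classifying equivalence), but two steps you treat as routine are exactly where the real work lies, and one of them as you state it is false. First, in part (2) you claim distinct points of $Z_{\mathbb R}$ give distinct characters ``because $C(Z)^{\mathbb T}$ separates points of $Z_{\mathbb R}$''. It does not: if $z \in Z_{\mathbb R}$ then $-z \in Z_{\mathbb R}$ as well (since $\tau(-z)=\overline{(-1)}\,\tau(z)=-z$), and $z$ and $-z$ lie in the same $\mathbb T$-orbit, so no $\mathbb T$-invariant function can tell them apart. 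The paper's argument is genuinely two-step: $\phi_y=\phi_z$ forces agreement on $C(Z)^{\mathbb T}$, hence $\mathbb T y = \mathbb T z$, hence $y=\pm z$ by $\mathbb T$-freeness; and then $\phi_z$ is distinguished from $\phi_{-z}$ by evaluating on an element $f_1\otimes \tau$ with $f_1 \in C_{\mathbb T}(Z)$, $f_1(\lambda z)=\lambda$ --- that is, the separation must come from the odd part of the algebra, not the even part. Second, both this step and your irreducibility argument for $z \in Z_{\rm reg}$ require the existence of some $f_1 \in C_{\mathbb T}(Z)$ with $f_1(z)\neq 0$, which you wave through with ``Stone--Weierstrass-type density''. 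This cannot be a Stone--Weierstrass argument: $C_{\mathbb T}(Z)$ is not a subalgebra (a product of two equivariant functions transforms by $\omega^2$), only a module over $C(Z)^{\mathbb T}$. This is precisely where $\mathbb T$-freeness enters in the paper: one defines $f(\lambda z)=\lambda$ on the orbit $\mathbb T z$ (well defined only because the action is free), extends it to $Z$ by Tietze, and averages, $f_1(y)=\int_{\mathbb T}\lambda^{-1}f(\lambda y)\,{\rm d}\lambda$, to obtain $f_1 \in C_{\mathbb T}(Z)$ with $f_1(z)=1$. Without this construction your ``rich enough'' claim is unsupported, and the irreducibility criterion and part (2) both collapse.

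Relatedly, your diagnosis of where freeness is essential is misplaced. The pivot lemma you propose, $\tau(z)\in \mathbb T z \iff z \in \mathbb T Z_{\mathbb R}$, holds with no freeness at all, by the square-root trick the paper uses: if $\tau(z)=\overline{\lambda}z$, choose $\mu \in \mathbb T$ with $\mu^2=\overline{\lambda}$; then $\tau(\mu z)=\overline{\mu}\,\overline{\lambda}z=\mu z$, so $z \in \mathbb T Z_{\mathbb R}$. Freeness is needed instead (i) to build the nonvanishing equivariant functions described above, and (ii) in part (2), to pass from $\mathbb T y=\mathbb T z$ to $y=\pm z$. The remainder of your outline --- the explicit unitaries (diagonal and flip) implementing equivalence along a $\mathbb T\rtimes \mathbb Z_2$-orbit, the character/invariant-function argument for the converse, and the decomposition $\theta_{\lambda y}\simeq \phi_y\oplus\phi_{-y}$ for $y \in Z_{\mathbb R}$ --- is sound and essentially the paper's; the fix consists of importing the Tietze-plus-averaging construction and redoing the injectivity argument in part (2).
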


\begin{proof}
It can be checked directly that $\theta_z$ defines a representation of $C(Z_{\mathbb R,*})$, or by using the standard embedding of the crossed product $C(Z)\rtimes \mathbb Z_2$ into $M_2(C(Z))$
\begin{align*}
 C(Z_{\mathbb R,*}) & \longrightarrow M_2(C(Z))\\
f_0 \otimes 1 + f_1\otimes \tau & \longmapsto 
\begin{pmatrix} 
 f_0 & f_1 \\
f_1\tau & f_0\tau
\end{pmatrix}
\end{align*}
composed with evaluation at $z \in Z$. Since any irreducible representation of $M_2(C(Z))$ is obtained by evaluation at an element $z \in Z$, we get that any irreducible representation of $C(Z_{\mathbb R,*})$ is isomorphic to a sub-representation of $\theta_z$ for some $z \in Z$, see e.g. \cite{dix}.

Now assume that $z \in \mathbb T Z_{\mathbb R}$: $z= \lambda y$
 for some $y \in Z_{\mathbb R}$. Then $\tau(z) = \tau(\lambda y)= \overline{\lambda}\tau(y)= \overline{\lambda} y=\overline{\lambda}^2z$, and for $f_0 \otimes 1 + f_1\otimes \tau \in C(Z_{\mathbb R, *})$, we have
$$\theta_z(f_0 \otimes 1 + f_1\otimes \tau)= \begin{pmatrix} 
 f_0(z) & f_1(z) \\
f_1(\tau(z)) & f_0(\tau(z))
\end{pmatrix} = \begin{pmatrix} 
 f_0(z) & f_1(z) \\
\overline{\lambda}^2f_1(z) & f_0(z)
\end{pmatrix}$$
This implies that $\theta_z(C(Z_{\mathbb R, *}))$ is abelian, and hence $\theta_z$ is not irreducible.

Assume that $z \in Z_{\rm reg}$. To show that $\theta_z$ is irreducible, it is enough to show that there exists
$f_0 \in C(Z)^{\mathbb T}$ and $f_1 \in C_{\mathbb T}(Z)$ such that
$$f_0(z) \not=f_0(\tau(z)), \ f_1(z) \not=0$$
Indeed, we then will have that $\theta_z(f_0 \otimes 1)$ and $\theta_z(f_1 \otimes \tau)$ do not commute, and hence
 $\theta_z(C(Z_{\mathbb R, *}))$ is a non-commutative $C^*$-subalgebra of $M_2(\mathbb C)$, so both algebras are equal and $\theta_z$ is irreducible. 
 
We have, since $z \in Z_{\rm reg}$, $\mathbb T z \not = \mathbb T \tau(z)$. Otherwise $z=\lambda \tau(z)$ for some $\lambda \in \mathbb T$, and for $\mu \in \mathbb T$ such that $\mu^2=\overline{\lambda}$, we have $z= \overline{\mu} \mu z$, with $\mu z \in Z_{\mathbb R}$, a contradiction. Hence by Urysohn's Lemma and the fact that $X/\mathbb T$ is Hausdorff,  there exists $f_0 \in C(Z)^{\mathbb  T}\simeq C(Z/\mathbb T)$ such that $f_0(z)\not=f_0(\tau(z))$, as needed.
Finally since the $\mathbb T$-action is free, there exists a (continuous) map $f : \mathbb T z \rightarrow \mathbb C$ such that $f(\lambda z)=\lambda$ for any $\lambda \in \mathbb T$, that we extend to to a continuous function $f$ on $Z$ (Tietze's extension theorem).
Now let 
$f_1\in C(Z)$ be defined by
$$f_1(y) =\int_{\mathbb T} \lambda^{-1}f(\lambda y){\rm d}\lambda$$  
We have $f_1 \in C_{\mathbb T}(Z)$ and $f_1(z)=1$, as needed, and we conclude that  $\theta_z$ is irreducible.

A finite-dimensional representation is determined by its character, and the character of the representation $\theta_z$ is given by $\chi_z(f_0 \otimes 1 + f_1\otimes \tau)=f_0(z)+f_0(\tau(z))$.

Let $z,x \in Z$ be such that $(\mathbb T \rtimes \mathbb Z_2)z\not= (\mathbb T \rtimes \mathbb Z_2)x$. 
Then there exists $f_0 \in C(Z)^{\mathbb T \rtimes \mathbb Z_2}$ such that $f_0(z)=1=f_0(\tau(z))$ and $f_0(x)=0=f_0(\tau(x))$. We have $f_0 \in C(Z)^{\mathbb T}$ and $\chi_z(f)=2$, $\chi_x(f)=0$, hence the representations $\theta_z$ and $\theta_x$ are not isomorphic.

If $(\mathbb T \rtimes \mathbb Z_2)z= (\mathbb T \rtimes \mathbb Z_2)x$, then either $z= \lambda x$ or $z=\lambda \tau(x)$ for some $\lambda \in \mathbb T$. From this we see that $f(z)+f(\tau(z))=f(x)+f(\tau(x))$ for any $f \in C(Z)^{\mathbb  T}$, and hence
$\chi_z=\chi_x$, which shows that $\theta_z$ and $\theta_x$ are isomorphic, and concludes the proof of (1).

For $z \in Z_{\mathbb R}$, it is a direct verification to check that $\phi_z$ above defines a $*$-algebra map
 $C(Z_{\mathbb R,*})  \rightarrow \mathbb C$. Now let $\psi : C(Z_{\mathbb R,*})  \rightarrow \mathbb C$ be a $*$-algebra map. The representation defined by $\psi$ is isomorphic to a sub-representation of
$\theta_z$ for some $z \in Z$, and with  $z \not \in Z_{\reg}$ because $\theta_z$ is not irreducible.  Hence $z=\lambda y$ for $y \in Z_{\mathbb R}$. We then have, for $f_0 \otimes 1 + f_1\otimes \tau \in C(Z_{\mathbb R, *})$,
$$\theta_z(f_0 \otimes 1 + f_1\otimes \tau)= \begin{pmatrix}
                                              f_{0}(y) & \lambda f_{1}(y) \\
\overline{\lambda} f_1(y) & f_0(y) 
                                             \end{pmatrix}$$
and from this we see that the lines generated by $(1, \overline{\lambda})$ and $(1, -\overline{\lambda})$ both are stable under $C(Z_{\mathbb R, *})$, so that $\theta_z \simeq \phi_y \oplus \phi_{-y}$ (this can also be seen using characters), and finally $\psi = \phi_{y}$ or $\psi=\phi_{-y}$. 

Let $y,z \in Z_{\mathbb R}$ be such that $\phi_y=\phi_z$. Then for any $f_0 \in C(Z)^{\mathbb T}$, we have $f_0(y)=f_0(z)$, hence $\mathbb T y = \mathbb T z$, hence $y=\pm z$. As before, there exists $f_1 \in C_{\mathbb T}(Z)$ such that $f_1(\lambda z)=\lambda$ for any $\lambda \in \mathbb T$. Then $\phi_z(f_1 \otimes \tau) =f_1(\tau(z))=f_1(z)=1=\pm f_1(y) = \pm \phi_y(f_1 \otimes \tau)$, hence $z=y$.

This proves (2), and (3) follows from the combination of (1) and (2).
\end{proof}

\subsection{Closed subspaces of $\widehat{C(Z_{\mathbb R,*})}$}
We now discuss $\widehat{C(Z_{\mathbb R,*})}$, the spectrum of $C(Z_{\mathbb R,*})$, endowed with its usual topology,
see \cite{dix} (since all the irreducible representations of $A$ are finite-dimensional, we know \cite{dix} that the topological spaces $\widehat{A}$ and ${\rm Prim}(A)$ are canonically homeomorphic).

For $E \subset Z_{\rm reg}$  and  $F \subset Z_{\mathbb R}$, we put 
$$M(E,F)= \left(\{\theta_z, \ z \in E\} \cup \{\phi_{z}, z \in F\}\right)/\!\sim \ \ \subset \ \widehat{C(Z_{\mathbb R,*})} $$
where of course $\sim$ means that we identify isomorphic representations. 
The previous proposition ensures that any subset of $\widehat{C(Z_{\mathbb R,*})}$ is of the form $M(E,F)$ for $E \subset Z_{\rm reg}$, a  $\mathbb T \rtimes \mathbb Z_2$-stable subspace, and  $F \subset Z_{\mathbb R}$. 
The next result describes the closed subsets of  $\widehat{C(Z_{\mathbb R,*})}$.

\begin{proposition}\label{closed}
 Let $E \subset Z_{\rm reg}$ be a $\mathbb T \rtimes \mathbb Z_2$-stable subspace, and let $F \subset Z_{\mathbb R}$. Then we have
$$\overline{M(E,F)} = M(\overline{E} \cap Z_{\rm reg}, \overline{F} \cup (\overline{E}  \cap Z_{\mathbb R}))$$
In particular there exists a bijection between closed subsets of $\widehat{C(Z_{\mathbb R,*})}$ and pairs $(E,F)$
with \begin{enumerate}
 \item $E \subset Z_{{\rm reg}}$ is $\mathbb T \rtimes \mathbb Z_2$-stable and $E= \overline{E} \cap Z_{{\rm reg}}$ ;
\item $F \subset Z_{\mathbb R}$ is closed and satisfies $\overline{E}\cap Z_{\mathbb R} \subset F$. 
\end{enumerate}
\end{proposition}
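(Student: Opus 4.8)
The plan is to work entirely in terms of primitive ideals. Since every irreducible representation of $A := C(Z_{\mathbb R,*})$ is finite-dimensional, $\widehat{A}$ and $\mathrm{Prim}(A)$ are homeomorphic, so $\overline{M(E,F)}$ is the hull of the ideal $J := \bigcap_{\sigma \in M(E,F)} \ker\sigma$, that is, the set of $\pi \in \widehat{A}$ with $\ker\pi \supseteq J$. First I would make $J$ explicit using Proposition \ref{reptheory}: reading off the matrix $\theta_z$ and the scalar $\phi_z$, an element $a = f_0\otimes 1 + f_1\otimes\tau$ lies in $J$ if and only if $f_0$ and $f_1$ vanish on $E$ (using that $E$ is $\tau$- and $\mathbb T$-stable) and $f_0+f_1$ vanishes on $F$. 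It then remains to decide, for each $w$, whether $\theta_w$ (for $w\in Z_{\rm reg}$) or $\phi_w$ (for $w\in Z_{\mathbb R}$) annihilates $J$.

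For the two-dimensional representations the computation is clean. If $w \in \overline{E}$ then $f_0,f_1$ vanish on $\overline{E}$ by continuity, and $\overline{E}$ is $\tau$-stable, so $\theta_w(a)=0$ for all $a\in J$; hence $\theta_w\in\overline{M(E,F)}$. Conversely, if $w\in Z_{\rm reg}\setminus\overline{E}$, I would produce a separating element: since $Z_{\rm reg}$ is disjoint from $\mathbb T Z_{\mathbb R}\supseteq \mathbb T\overline{F}$ and $w\notin\overline{E}$, the orbit $\mathbb T w$ is disjoint from the closed $\mathbb T$-stable set $\overline{E}\cup\mathbb T\overline{F}$; averaging as in the proof of Proposition \ref{reptheory} and multiplying by a $\mathbb T$-invariant Urysohn function, I obtain $f_1\in C_{\mathbb T}(Z)$ vanishing on that set with $f_1(w)=1$. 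Then $a=f_1\otimes\tau\in J$ while $\theta_w(a)\neq 0$. This gives $\theta_w\in\overline{M(E,F)}\iff w\in\overline{E}\cap Z_{\rm reg}$.

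The one-dimensional representations carry the real difficulty. The forward implications are again continuity: $w\in\overline{F}$ forces $(f_0+f_1)(w)=0$, and $w\in\overline{E}\cap Z_{\mathbb R}$ forces $f_0(w)=f_1(w)=0$, so in either case $\phi_w\in\overline{M(E,F)}$. The converse, that $w\in Z_{\mathbb R}\setminus(\overline{F}\cup\overline{E})$ produces some $a\in J$ with $\phi_w(a)\neq 0$, is where I expect the main obstacle. The subtlety is that although $w\notin\overline{F}$, it may happen that $-w\in\overline{F}$; since $w$ and $-w$ both lie in $Z_{\mathbb R}$ and in the same $\mathbb T$-orbit, the relation $f_0+f_1=0$ evaluated at $-w$ forces $f_0(w)=f_1(w)$ for every $a\in J$, so one cannot take $f_0=0$ or $f_1=0$ and must build $f_0$ and $f_1$ together. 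I would handle this by first choosing $f_1\in C_{\mathbb T}(Z)$ with $f_1(w)=1$ vanishing on $\overline{E}\cup\mathbb T S$, where $S=\overline{F}\cap(-\overline{F})$ is the sign-symmetric locus responsible for the compatibility obstruction — one checks $w\notin\overline{E}\cup\mathbb T S$ — and then defining $f_0\in C(Z)^{\mathbb T}$ by prescribing the value $0$ on $q(\overline{E})$ and the value $-f_1$ on $q(\overline{F})$, where $q:Z\to Z/\mathbb T$. The vanishing of $f_1$ on $\overline{E}$ and on $S$ is exactly what makes this prescription single-valued and continuous on $q(\overline{E})\cup q(\overline{F})$, so it extends by Tietze; the resulting $a$ lies in $J$ and satisfies $\phi_w(a)=2\neq 0$, as required.

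Finally I would assemble the two dichotomies to read off $\overline{M(E,F)}=M(\overline{E}\cap Z_{\rm reg},\ \overline{F}\cup(\overline{E}\cap Z_{\mathbb R}))$. For the bijection, recall that by Proposition \ref{reptheory} every subset of $\widehat{A}$ is of the form $M(E,F)$ with $E\subset Z_{\rm reg}$ a $\mathbb T\rtimes\mathbb Z_2$-stable subspace and $F\subset Z_{\mathbb R}$; the two-dimensional classes recover the orbit set of $E$, and the one-dimensional representations recover $F$ exactly (as $\phi_z=\phi_y\iff z=y$), so $(E,F)\mapsto M(E,F)$ is injective once we normalize $E=\overline{E}\cap Z_{\rm reg}$. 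The closure formula then shows that $M(E,F)$ is closed precisely when $E=\overline{E}\cap Z_{\rm reg}$ and $\overline{F}\cup(\overline{E}\cap Z_{\mathbb R})=F$, i.e. exactly when $E=\overline{E}\cap Z_{\rm reg}$, $F$ is closed, and $\overline{E}\cap Z_{\mathbb R}\subset F$, which is the claimed normal form and yields the bijection.
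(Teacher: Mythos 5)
Your overall skeleton (hull--kernel description of the closure, a dichotomy for the two-dimensional classes, a dichotomy for the characters, then the normal form giving the bijection) is sound, and your final assembly and bijection argument agree with the paper's. But the execution differs at one organizational point, and that difference is where your proof develops a gap. The paper first writes $\overline{M(E,F)}=\overline{M(E,\emptyset)}\cup\overline{M(\emptyset,F)}$ (closure of a finite union is the union of closures), disposes of $\overline{M(\emptyset,F)}=M(\emptyset,\overline{F})$ by noting that $z\mapsto\phi_z$ is a homeomorphism of $Z_{\mathbb R}$ onto the closed subset $\widehat{A}_1\subset\widehat{A}$, and then only has to produce separating elements for the ideal $\bigcap_{z\in E}\Ker(\theta_z)$: these need to vanish on $\overline{E}$ only, so a $\mathbb T\rtimes\mathbb Z_2$-invariant Urysohn function $f_0\otimes 1$ always suffices. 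You instead keep the full ideal $J$ throughout, so your separating elements must simultaneously kill all $\phi_z$, $z\in F$; this is exactly what creates the $\pm$-compatibility problem you isolate via $S=\overline{F}\cap(-\overline{F})$, a problem that simply never arises in the paper's route. (What your route buys in exchange is modest: you avoid invoking that $\widehat{A}_1$ is closed in $\widehat{A}$ and homeomorphic to $Z_{\mathbb R}$.)

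The gap is in your construction for $w\in Z_{\mathbb R}\setminus(\overline{E}\cup\overline{F})$: you conclude $\phi_w(a)=2$, which requires $f_0(w)=1$, but Tietze extension only controls $f_0$ on $q(\overline{E})\cup q(\overline{F})$. The value $f_0(q(w))$ is forced to equal $1$ exactly when $q(w)\in q(\overline{F})$, i.e.\ when $-w\in\overline{F}$ (then the prescription gives $f_0(w)=-f_1(-w)=f_1(w)=1$). In the complementary case $-w\notin\overline{F}$, the point $q(w)$ lies outside the prescription set and the extension may take any value there, e.g.\ $f_0(w)=-1$, which makes $\phi_w(a)=0$ and destroys the separation. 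The fix is one line: add the closed set $\{q(w)\}$ with prescribed value $1$ to the prescription; this is consistent in all cases, since $q(w)\notin q(\overline{E})$ always (as $\overline{E}$ is $\mathbb T$-stable and $w\notin\overline{E}$), and if $q(w)\in q(\overline{F})$ the existing prescription there already equals $1$ as just computed. Alternatively, treat the case $-w\notin\overline{F}$ separately: there $\mathbb Tw$ misses the closed $\mathbb T$-stable set $\overline{E}\cup\mathbb T\overline{F}$, so $a=f_0\otimes 1$ with $f_0\in C(Z)^{\mathbb T}$ a Urysohn function ($f_0(w)=1$, vanishing on that set) already lies in $J$ and separates. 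With either repair your argument is complete and correct.
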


\begin{proof}
 Put $A=C(Z_{\mathbb R,*})$. 
For $S \subset \widehat{A}$, we have
$$\overline{S} = \{ \pi \in \widehat{A} \ | \ \bigcap_{\rho \in S}{\rm Ker}(\rho) \subset {\rm Ker}(\pi)\}$$
For $E,F$ as in the statement of the proposition, we have
$$\overline{M(E,F)} = \overline{M(E,\emptyset) \cup M(\emptyset, F)} = \overline{M(E,\emptyset)} \cup \overline{M(\emptyset,F)}$$
Hence we can study the two pieces separately. The bijective map
\begin{align*}
 Z_{\mathbb R} &\longrightarrow \widehat{A}_1 \\
z & \longmapsto \phi_z
\end{align*}
where $\widehat{A}_1$ consists of the set of $1$-dimensional representations, is clearly continuous, and since $Z_{\mathbb R}$ and $\widehat{A}_1$ both are compact, this is an homeomorphism. Hence it sends the closure of a subset in $Z_{\mathbb R}$ to the closure of the image in $\widehat{A}_1$, and hence to the closure of the image in $\widehat{A}$, since $\widehat{A}_1$ is closed in $\widehat{A}$. Thus $\overline{M(\emptyset,F)}= M(\emptyset , \overline{F})$. Consider now the following two claims:
$${\rm For} \ y \in Z_{\mathbb R}, \ \bigcap_{z \in E} {\rm Ker}(\theta_{z}) \subset {\rm Ker}(\phi_y) \iff y \in \overline{E} \cap Z_{\mathbb R} \quad (*)$$
$${\rm For} \ y \in Z_{\rm reg}, \ \bigcap_{z \in E} {\rm Ker}(\theta_{z}) \subset {\rm Ker}(\theta_y) \iff y \in \overline{E} \cap Z_{\rm reg}\quad (**)$$
Once these claims are proved, we will indeed have
$$\overline{M(E,\emptyset)} = M(\overline{E} \cap Z_{\rm reg}, \overline{E}  \cap Z_{\mathbb R})$$
as required. We begin with ($*$). Let $y \in Z_{\mathbb R}$. Assume first that $y \in \overline{E} \cap Z_{\mathbb R}$. Let $f_0 \otimes 1 + f_1 \otimes \tau \in  \cap_{z \in E} {\rm Ker}(\theta_{z})$. Then 
$f_0$ and $f_1$ vanish on $E$, and hence on $\overline{E}$, and $f_0 \otimes 1 + f_1 \otimes \tau \in {\rm Ker}(\phi_y)$. Thus $\cap_{z \in E} {\rm Ker}(\theta_{z}) \subset {\rm Ker}(\phi_y)$. Conversely assume that $y \not  \in \overline{E} \cap Z_{\mathbb R}$. Then $\mathbb Ty \cap \overline{E}=\emptyset$ since $E$ is $\mathbb T$-stable and there exists $f_0 \in C(Z)^{\mathbb T}$ such that $f_0(y)=1$ and $f_0(\overline{E})=0$. We then have 
$f_0 \otimes 1 \in  \cap_{z \in E} {\rm Ker}(\theta_{z})$ while $f_0 \otimes 1 \not \in {\rm Ker}(\phi_y)$, and ($*$) is proved.

Now let $y \in Z_{\rm reg}$. Assume first that $y \in \overline{E} \cap Z_{\rm reg}$. Let $f_0 \otimes 1 + f_1 \otimes \tau \in  \cap_{z \in E} {\rm Ker}(\theta_{z})$. Then 
$f_0$ and $f_1$ vanish on $E$, and hence on $\overline{E}$, and $f_0 \otimes 1 + f_1 \otimes \tau \in {\rm Ker}(\theta_y)$. Thus $\cap_{z \in E} {\rm Ker}(\theta_{z}) \subset {\rm Ker}(\theta_y)$. Conversely assume that $y \not  \in \overline{E} \cap Z_{\rm reg}$. Then $(\mathbb T\rtimes \mathbb Z_2)y \cap \overline{E}=\emptyset$ since $E$ is $\mathbb T \rtimes \mathbb Z_2$-stable and there exists $f_0 \in C(Z)^{\mathbb T\rtimes \mathbb Z_2}$ such that $f_0(y)=1$ and $f_0(\overline{E})=0$.  We then have 
$f_0 \otimes 1 \in  \cap_{z \in E} {\rm Ker}(\theta_{z})$ while $f_0 \otimes 1 \not \in {\rm Ker}(\theta_y)$, and ($**$) is proved.

For $E,E' \subset Z_{\rm reg}$,   and $F,F' \subset Z_{\mathbb R}$, then by Proposition \ref{reptheory} we have $\mathcal M(E,F)=\mathcal M(E',F')\Rightarrow$ $F=F'$ and $E=E'$ if $E$ and $E'$ are $\mathbb T \rtimes \mathbb Z_2$-stable, hence  the last assertion follows from the first one.
\end{proof}

We are now ready to prove the Theorem \ref{closed1}.

\begin{proof}[Proof of Theorem \ref{closed1}]
This follows from Proposition \ref{closed} and the standard correspondence between quotients of a $C^*$-algebra and closed subsets of its spectrum \cite{dix}. More precisely to a pair $(E,F)$ as in the statement is associated the $C^*$-algebra
$C(Z_{\mathbb R, *})/(\cap_{\pi \in M(E,F)}{\rm Ker}(\pi))$, and conversely, to a quotient $C^*$-algebra map $q : C(Z_{\mathbb R, *}) \rightarrow B$, is associated the pair $(E,F)$ such that 
$\{\pi \in \widehat{C(Z_{\mathbb R, *})} \ | \  {\rm Ker}(q) \subset {\rm Ker}(\pi)\}=M(E,F)$, i.e.
$$E= \{z \in Z_{\rm reg} \ | \  {\rm Ker}(q) \subset {\rm Ker}(\theta_z)\}, \quad F= \{z \in Z_{\mathbb R} \ | \  {\rm Ker}(q) \subset {\rm Ker}(\phi_z)\}$$
The $C^*$-algebra $B$ is non-commutative if and only if it has an irreducible representation of dimension $>1$, if and only if the corresponding $E$ is non-empty. 
\end{proof}

\begin{remark}{\rm 
 It follows from the above considerations that plenty of intermediate quantum subspaces $S_{\mathbb R}^{n-1} \subset X \subset S_{\mathbb R, *}^{n-1}$ exist for $n \geq 2$. Indeed, for any $m \geq 1$, there exists $S_{\mathbb R}^{n-1} \subset X \subset S_{\mathbb R, *}^{n-1}$ such that $C(X)$ has precisely $m$ isomorphism classes of irreducible representations of dimension $2$. This follows from the fact that the orbit space $S_{\mathbb C, {\rm reg}}^{n-1}/(\mathbb T \rtimes \mathbb Z_2)$ is infinite, and hence if we pick $E \subset S_{\mathbb C, {\rm reg}}^{n-1}$ (closed and $\mathbb T \rtimes \mathbb Z_2$-stable) such that $E/(\mathbb T \rtimes \mathbb Z_2)$ has $m$ elements, the quantum space corresponding to the pair $(E, S_{\mathbb R}^{n-1})$ satisfies to the above property.
}
\end{remark}


\subsection{Symmetric subspaces}
It is now natural to wonder about the link between the description of the symmetric subspaces of $S^{n-1}_{\mathbb R,*}$ in Section 3 and the one that should arise from Theorem \ref{closed1}. We first note that there exists also a notion of symmetric subspace in the general framework.
Indeed, define an automorphism $\sigma$ of $C(Z_{\mathbb R,*})$ by $\sigma(f_0 \otimes 1 + f_1\otimes \tau)=f_0 \otimes 1 - f_1\otimes \tau$. We say that a subspace $X \subset Z_{\mathbb R,*}$ is symmetric if $\sigma(X)=X$, or in other words, if $\sigma$ induces an automorphism on the corresponding quotient of $C(Z_{\mathbb R,*})$.  When $Z=S^{n-1}_{\mathbb C}$, the automorphism $\sigma$ is the sign automorphism $\nu$ of Section 3.

 \begin{theorem}\label{closed2}
 There exists a bijection 
$$\{{\rm symmetric} \  {\rm quantum} \ {\rm subspaces} \  X \subset Z_{\mathbb R,*}\} \longleftrightarrow 
\{ \mathbb T\rtimes \mathbb Z_2{-\rm stable} \ {\rm closed} \ {\rm subspaces} \ Y \subset  Z\}$$ 
Moreover the subspace $X\subset  Z_{\mathbb R,*}$ is non-classical if and only if, for the  the corresponding $Y \subset  Z$, we have $Y_{\rm reg} =Y \cap Z_{\rm reg}\not=\emptyset$.
\end{theorem}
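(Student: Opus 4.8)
The plan is to read the symmetric subspaces off the parametrization by pairs $(E,F)$ of Theorem \ref{closed1}, after computing how the automorphism $\sigma$ acts on the spectrum $\widehat{C(Z_{\mathbb{R},*})}$ described in Proposition \ref{reptheory}. Writing $A=C(Z_{\mathbb{R},*})$ and $I_X$ for the ideal of a subspace $X$, the definition of symmetry is $\sigma(I_X)=I_X$; since $\sigma$ is an involution, this translates, via the standard ideal/closed-set correspondence, into the condition that the closed set $M(E,F)$ be invariant under the induced involution $\pi\mapsto\pi\circ\sigma$ of $\widehat{A}$. So the first step is to evaluate this induced action. I would compute directly that $\theta_z\circ\sigma$ is the conjugate of $\theta_z$ by the diagonal matrix $\mathrm{diag}(1,-1)$, so that $\sigma$ fixes the class $[\theta_z]$ of every two-dimensional point; and that $\phi_z\circ\sigma=\phi_{-z}$ for $z\in Z_{\mathbb{R}}$. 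This last identity is the heart of the matter: it uses $f_0(-z)=f_0(z)$ (as $f_0\in C(Z)^{\mathbb{T}}$ and $-1\in\mathbb{T}$) together with $f_1(-z)=-f_1(z)$ (as $f_1\in C_{\mathbb{T}}(Z)$).

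It then follows that a subspace with data $(E,F)$ is symmetric if and only if $-F=F$, and that no extra condition is imposed on $E$: the set $\{[\theta_z]:z\in E\}$ is automatically $\sigma$-invariant because each $[\theta_z]$ is $\sigma$-fixed (and $E$ is already $\mathbb{T}\rtimes\mathbb{Z}_2$-stable by Theorem \ref{closed1}), while the one-dimensional part $\{\phi_z:z\in F\}$ is invariant exactly when $F$ is stable under $z\mapsto -z$. Hence symmetric subspaces correspond bijectively to pairs $(E,F)$ satisfying conditions (1)--(2) of Theorem \ref{closed1} together with $-F=F$.

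Next I would match these pairs with the $\mathbb{T}\rtimes\mathbb{Z}_2$-stable closed subspaces $Y\subset Z$. In one direction send $Y$ to $(E,F)=(Y\cap Z_{\mathrm{reg}},\,Y\cap Z_{\mathbb{R}})$; one checks $E$ is $\mathbb{T}\rtimes\mathbb{Z}_2$-stable and closed in $Z_{\mathrm{reg}}$, that $F$ is closed with $\overline{E}\cap Z_{\mathbb{R}}\subset F$, and that $-F=F$ because $-1\in\mathbb{T}$ and $Y$ is $\mathbb{T}$-stable. In the other direction send $(E,F)$ to $Y=\overline{E}\cup\mathbb{T}F$, which is closed ($\mathbb{T}F$ is compact, being a continuous image of $\mathbb{T}\times F$) and $\mathbb{T}\rtimes\mathbb{Z}_2$-stable. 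To see the two assignments are mutually inverse, the key input is freeness of the $\mathbb{T}$-action: if $\lambda f\in Z_{\mathbb{R}}$ with $f\in F$ then $\overline{\lambda}f=\lambda f$ forces $\lambda=\pm1$, so $\mathbb{T}F\cap Z_{\mathbb{R}}=F$; combined with $\mathbb{T}F\cap Z_{\mathrm{reg}}=\emptyset$ and $\overline{E}\cap Z_{\mathrm{reg}}=E$ this recovers $(E,F)$, while the partition $Z=Z_{\mathrm{reg}}\sqcup\mathbb{T}Z_{\mathbb{R}}$ recovers $Y$ from $(E,F)$.

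Finally, the non-classicality statement is immediate from Theorem \ref{closed1}, which says $X$ is non-classical if and only if $E\neq\emptyset$, together with the identity $E=Y\cap Z_{\mathrm{reg}}=Y_{\mathrm{reg}}$ furnished by the bijection. The main obstacle is the spectral computation of the first step—pinning down $\phi_z\circ\sigma=\phi_{-z}$ and confirming that $\sigma$ acts trivially on the two-dimensional points—since the remainder is a matter of verifying closedness and equivariance and exploiting freeness of the $\mathbb{T}$-action to invert the construction.
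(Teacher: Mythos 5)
Your proposal is correct and follows essentially the same route as the paper: compute the induced action of $\sigma$ on the spectrum (the classes $[\theta_z]$ are fixed, $\phi_z\circ\sigma=\phi_{-z}$), deduce that symmetry of $X$ amounts to the condition $F=-F$ on its pair $(E,F)$, and then match such pairs with $\mathbb T\rtimes\mathbb Z_2$-stable closed subsets via $Y\mapsto (Y_{\rm reg},Y_{\mathbb R})$ and $(E,F)\mapsto E\cup\mathbb TF$ (your $\overline{E}\cup\mathbb TF$ coincides with the paper's $E\cup\mathbb TF$ because $\overline{E}\cap Z_{\mathbb R}\subset F$). The only differences are cosmetic: the paper proves $\theta_z\sigma=\theta_{-z}\simeq\theta_z$ instead of exhibiting the conjugation by $\mathrm{diag}(1,-1)$, and it leaves to the reader the inverse-bijection verifications that you spell out using freeness of the $\mathbb T$-action.
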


\begin{proof}
We retain the notation of Proposition \ref{reptheory}.
It is straightforward to check that for $z \in Z_{\rm reg}$, we have 
$\theta_z\sigma=\theta_{-z}\simeq \theta_{z}$ and that for $z \in Z_{\mathbb R}$, we have $\phi_z\sigma=\phi_{-z}$.

Now if $X \subset Z_{\mathbb R, *}$ is a quantum subspace, then saying that $X$ is symmetric precisely means that for
the corresponding pair $(E,F)$, we have that  the corresponding ideal $\cap_{\pi \in M(E,F)}{\rm Ker}(\pi)$ is $\sigma$-stable. We have
\begin{align*}\sigma(\cap_{\pi \in M(E,F)}&{\rm Ker}(\pi)) =\sigma( (\cap_{z \in E}{\rm Ker}(\theta_z)) \cap \sigma(\cap_{z \in F}{\rm Ker}(\phi_z)) \\
= &\left(\cap_{z \in E}{\rm Ker}(\theta_z\sigma^{-1})\right) \cap \left(\cap_{z \in E}{\rm Ker}(\phi_z\sigma^{-1})\right) 
=\left(\cap_{z \in E}{\rm Ker}(\theta_z)\right) \cap \left(\cap_{z \in F}{\rm Ker}(\phi_{-z})\right)\\
=& \cap_{\pi \in M(E,-F)}{\rm Ker}(\pi)\end{align*}
Therefore  $X \subset Z_{\mathbb R, *}$ is symmetric if and only if for the corresponding pair $(E,F)$, we have $F=-F$. To such a pair $(E,F)$,  we associate the closed $\mathbb T\rtimes \mathbb Z_2$-stable  subset $Y=E \cup \mathbb T F$.
Conversely, if $Y$ is $\mathbb T\rtimes \mathbb Z_2$-stable, then $(Y_{\rm reg}, Y_{\mathbb R})$ is a pair as above.
The two maps are inverse bijections.
\end{proof}

Of course the above Theorem reproves the first part of Corollary \ref{symsub}, since  closed $\mathbb T\rtimes \mathbb Z_2$-stable subspaces of $S^{n-1}_{\mathbb C}$ correspond to conjugation stable closed subspaces of $P^{n-1}_{\mathbb C}$. 

To finish, we provide another explicit description of the bijection in the proof of Theorem \ref{closed2}. If $Y \subset Z$ is a closed $\mathbb T\rtimes \mathbb Z_2{-\rm stable}$ subspace, then we may form the $C^*$-algebra $C(Y_{\mathbb R, *})$ as before, and the restriction of functions yields a $*$-algebra map $C(Z_{\mathbb R, *}) \rightarrow C(Y_{\mathbb R, *})$, which is easily seen to be surjective thanks to the standard extension theorems.
We thus get a quantum subspace $Y_{\mathbb R, *} \subset Z_{\mathbb R, *}$.

\begin{theorem}
 The map 
\begin{align*}
 \{ \mathbb T\rtimes \mathbb Z_2{-\rm stable} \ {\rm closed} \ {\rm subsets} \ Y \subset  Z\} & \longrightarrow
\{{\rm symmetric} \  {\rm quantum} \ {\rm subspaces} \  X \subset Z_{\mathbb R,*}\} \\
Y & \longmapsto Y_{\mathbb R, *}
\end{align*}
is a bijection.
\end{theorem}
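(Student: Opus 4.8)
The plan is to show that the map $Y \mapsto Y_{\mathbb{R},*}$ agrees with the bijection of Theorem \ref{closed2}, read in the direction sending a $\mathbb{T}\rtimes\mathbb{Z}_2$-stable closed set $Y$ to its associated symmetric subspace; since that correspondence is already known to be a bijection, the map in question is one as well. Concretely, writing $q : C(Z_{\mathbb{R},*}) \to C(Y_{\mathbb{R},*})$ for the surjective restriction map, I would prove that the quantum subspace $Y_{\mathbb{R},*} \subset Z_{\mathbb{R},*}$ corresponds, under the bijection of Theorem \ref{closed1}, to the pair $(Y_{\rm reg}, Y_{\mathbb{R}})$ with $Y_{\rm reg} = Y \cap Z_{\rm reg}$ and $Y_{\mathbb{R}} = Y \cap Z_{\mathbb{R}}$ --- which is precisely the pair attached to $Y$ in the proof of Theorem \ref{closed2}.

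First I would record the elementary set-theoretic facts that make $(Y_{\rm reg}, Y_{\mathbb{R}})$ a legitimate pair and make the general construction applied to $Y$ compatible with this notation. Since $Y$ is $\mathbb{T}$-stable one has $Y \cap \mathbb{T}Z_{\mathbb{R}} = \mathbb{T}(Y \cap Z_{\mathbb{R}}) = \mathbb{T}Y_{\mathbb{R}}$, so that $Z = Z_{\rm reg} \cup \mathbb{T}Z_{\mathbb{R}}$ meets $Y$ in the decomposition $Y = Y_{\rm reg} \cup \mathbb{T}Y_{\mathbb{R}}$; in particular the intrinsic regular part of the space $Y$, namely $Y \setminus \mathbb{T}Y_{\mathbb{R}}$, equals $Y \cap Z_{\rm reg}$. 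As $Y$ is closed one gets $\overline{Y_{\rm reg}} \subset Y$, whence $Y_{\rm reg} = \overline{Y_{\rm reg}} \cap Z_{\rm reg}$, while $Y_{\mathbb{R}}$ is closed and $\overline{Y_{\rm reg}} \cap Z_{\mathbb{R}} \subset Y \cap Z_{\mathbb{R}} = Y_{\mathbb{R}}$, so conditions (1)--(2) of Proposition \ref{closed} hold. Finally $-1 \in \mathbb{T}$ forces $Y_{\mathbb{R}} = -Y_{\mathbb{R}}$, which by the criterion isolated in the proof of Theorem \ref{closed2} is exactly the symmetry condition $F = -F$ on the pair.

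The heart of the argument is to identify the closed subset of $\widehat{C(Z_{\mathbb{R},*})}$ cut out by ${\rm Ker}(q)$. Applying Proposition \ref{reptheory} to the space $Y$, every irreducible representation of $C(Y_{\mathbb{R},*})$ is one of the $\theta_z^Y$ for $z \in Y_{\rm reg}$ or $\phi_z^Y$ for $z \in Y_{\mathbb{R}}$. Since $q$ sends $f_0 \otimes 1 + f_1 \otimes \tau$ to $(f_0|_Y) \otimes 1 + (f_1|_Y) \otimes \tau$, and the defining matrices of $\theta_z$ and $\phi_z$ involve only the values of $f_0, f_1$ at $z$ and $\tau(z) \in Y$, one checks at once that $\theta_z^Y \circ q = \theta_z$ and $\phi_z^Y \circ q = \phi_z$ for the relevant $z$. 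Because $q$ is surjective, pullback along $q$ is a homeomorphism of $\widehat{C(Y_{\mathbb{R},*})}$ onto the closed set $\{\rho : {\rm Ker}(q) \subset {\rm Ker}(\rho)\} \subset \widehat{C(Z_{\mathbb{R},*})}$, and the identities just noted show this set to be exactly $M(Y_{\rm reg}, Y_{\mathbb{R}})$. Hence $Y_{\mathbb{R},*}$ corresponds to the pair $(Y_{\rm reg}, Y_{\mathbb{R}})$, i.e. to the symmetric subspace assigned to $Y$ by Theorem \ref{closed2}, and the map $Y \mapsto Y_{\mathbb{R},*}$ is that bijection.

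I expect the one point requiring care to be the claim that the image of $\widehat{C(Y_{\mathbb{R},*})}$ in $\widehat{C(Z_{\mathbb{R},*})}$ is \emph{exactly} $M(Y_{\rm reg}, Y_{\mathbb{R}})$ --- that the $\theta_z^Y, \phi_z^Y$ remain irreducible and pairwise non-isomorphic after pullback and exhaust the hull of ${\rm Ker}(q)$ --- which is precisely what Proposition \ref{reptheory} applied to $Y$, together with the compatibilities above, delivers. Should one prefer a representation-free variant, the same conclusion follows by computing ${\rm Ker}(q) = \{f_0 \otimes 1 + f_1 \otimes \tau : f_0, f_1 \text{ vanish on } Y\}$ directly and comparing it with $\bigcap_{\pi \in M(Y_{\rm reg}, Y_{\mathbb{R}})} {\rm Ker}(\pi)$; there the sole delicate step is that on $Y_{\mathbb{R}}$ the evaluation $\phi_z$ yields only $f_0(z) + f_1(z) = 0$, so one must also invoke $\phi_{-z}$ (available since $Y_{\mathbb{R}} = -Y_{\mathbb{R}}$) to separate the vanishing of $f_0$ and of $f_1$, and then use the $\mathbb{T}$-invariance of $f_0$ and the $\mathbb{T}$-equivariance of $f_1$ to propagate vanishing from $Y_{\mathbb{R}}$ to $\mathbb{T}Y_{\mathbb{R}}$, and hence to all of $Y = Y_{\rm reg} \cup \mathbb{T}Y_{\mathbb{R}}$.
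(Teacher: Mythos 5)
Your proposal is correct and takes essentially the same route as the paper: the paper's proof also identifies the quantum subspace $Y_{\mathbb R,*}$ with the pair $(Y_{\rm reg}, Y_{\mathbb R})$, via the identity $\mathrm{Ker}(q)=\bigcap_{\pi \in M(Y_{\rm reg},Y_{\mathbb R})}\mathrm{Ker}(\pi)$ (left there as a ``direct verification'', which is exactly the computation in your final paragraph, including the use of $\phi_{-z}$ and $\mathbb T$-equivariance), and then invokes the bijection already established in the proof of Theorem \ref{closed2}. Your main variant --- applying Proposition \ref{reptheory} to $Y$ itself and using the pullback homeomorphism of $\widehat{C(Y_{\mathbb R,*})}$ onto the hull of $\mathrm{Ker}(q)$ --- is merely a more structured way of carrying out that same verification.
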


\begin{proof}
 Let $q : C(Z_{\mathbb R, *}) \rightarrow C(Y_{\mathbb R, *})$ the surjective $*$-algebra map associated to the restriction of functions. It is a direct verification to check that
$${\rm Ker}(q)= (\cap_{y \in Y_{\rm reg}} {\rm Ker}(\theta_y)) \cap (\cap_{y \in Y_{\mathbb R}} {\rm Ker}(\phi_y))=
\bigcap_{\pi \in M(Y_{\rm reg},Y_{\mathbb R})}{\rm Ker}(\pi)$$
Hence, in terms of pairs $(E,F)$ corresponding to closed subsets of $\widehat{C(Z_{\mathbb R,*})},$ our map associates the pair $(Y_{\rm reg},Y_{\mathbb R})$ to $Y$. This map is, as already discussed, a bijection.
\end{proof}


\begin{thebibliography}{99}

\bibitem{ban1} T. Banica, Liberations and twists of real and complex spheres, {\em J. Geom. Phys.} {\bf 96} (2015), 1-25.

\bibitem{ban2} T. Banica, Half-liberated manifolds, and their quantum isometries, arXiv:1505.00646. 


\bibitem{bago} T. Banica, D. Goswami, Quantum isometries and noncommutative spheres, {\em Comm. Math. Phys.} {\bf 298} (2010), no. 2, 343-356.

\bibitem{basp} T. Banica, R. Speicher, Liberation of orthogonal Lie groups, {\em Adv. Math.} {\bf 222} (2009), no.4, 1461-1501. 

\bibitem{bve} T. Banica, R. Vergnioux, Invariants of the half-liberated orthogonal group, {\em Ann. Inst. Fourier} {\bf 60} (2010), no. 6, 2137-2164.

\bibitem{bdv} J. Bichon, M. Dubois-Violette, Half-commutative orthogonal Hopf algebras, {\em Pacific J. Math.} {\bf 263} (2013), no. 1, 13-28.

\bibitem{dix}
J. Dixmier,
$C^*$-algebras, North-Holland Mathematical Library 15. North-Holland Publishing Co. (1977).

\bibitem{pod} P. Podle\'s, Symmetries of quantum spaces. Subgroups and quotient spaces of quantum $SU(2)$ and $SO(3)$ groups,  {\em Comm. Math. Phys.} {\bf 170} (1995), no. 1, 1-20.

\bibitem{wil}
D. P. Williams,  Crossed products of $C^*$-algebras. Mathematical Surveys and Monographs, 134. American Mathematical Society, Providence, RI, 2007. 

\bibitem{wor} S.L. Woronowicz,  Tannaka-Krein duality for compact matrix pseudogroups. Twisted $SU(N)$ groups, {\em Invent. Math.} {\bf 93} (1988), no. 1, 35-76. 

\end{thebibliography}
\end{document}